\newtheoremstyle{remboldstyle}
  {}{}{\itshape}{}{\bfseries}{.}{.5em}{{\thmname{#1 }}{\thmnumber{#2}}{\thmnote{ (#3)}}}
\theoremstyle{remboldstyle}
\newtheorem{thm}{Theorem}[section]
\newtheorem{prop}[thm]{Proposition}
\newtheorem{lem}[thm]{Lemma}
\newtheorem{cor}[thm]{Corollary}
\newtheorem{question}[thm]{Question}
\theoremstyle{definition}
\newtheorem{definition}[thm]{Definition}
\newtheorem{rem}[thm]{Remark}
\newtheorem{notation}[thm]{Notation}
\newtheorem{thmx}{Theorem}
\numberwithin{equation}{section}
\def\Bbb{\mathbb}
\def\complex{\Bbb C}
\newcommand{\Chat}{\widehat{\mathbb{C}}}
\DeclareMathOperator{\dist}{dist} 
\begin{document}


\title[Analytic and Topological Nets]{Analytic and Topological Nets}


\subjclass{Primary: 30C10, 30C62, 30E10,  Secondary: 41A20}
\author{Kirill Lazebnik}
\address{Kirill Lazebnik\\
Mathematics Department \\
University of North Texas \\
Denton, TX, 76205}
\email{Kirill.Lazebnik@unt.edu}





\begin{abstract} We characterize which planar graphs arise as the pullback, under a rational map $r$, of an analytic Jordan curve passing through the critical values of $r$. We also prove that such pullbacks are dense within the collection of $f^{-1}(\Sigma)$, where $f$ is a branched cover of the sphere and $\Sigma$ is a Jordan curve passing through the branched values of $f$. 

\end{abstract}


\maketitle


\section{Introduction}\label{intro}




In this paper we study the following objects:

\begin{definition}\label{analytic_net_defn} An \emph{analytic net} is a set of the form $r^{-1}(\Gamma)$ where $r: \Chat \rightarrow \Chat$ is a rational function and $\Gamma$ is an analytic Jordan curve passing through all the critical values of $r$. 
\end{definition}

Two examples are shown in Figure \ref{two_beginning_ex}, and several more in the Appendix. It was proven in \cite{MR1888795} that a real rational function $r$ with real critical points is determined (up to a real Mobius transformation) by its set of critical points, denoted $\textrm{CP}(r)$, together with the topology of the net $r^{-1}(\mathbb{R})$. Thus, nets are a natural object to look at in trying  to answer the following question of  \cite{38274}:


\begin{question}\label{thurston_question} Given a set of $2d-2$ points on $\Chat$ to be critical points (in the domain), it has been known since Schubert that there are Catalan$(d)$ rational functions [up to Mobius transformations in the range] with those critical points. Is there a conceptual way to describe and identify them?
\end{question}

In studying Question \ref{thurston_question}, it is useful to pass to the less rigid setting of \emph{branched covers} (these are the topological or ``floppy'' versions of rational functions: see the appendix for a definition), and the topological version of Definition \ref{analytic_net_defn}:





\begin{definition} A \emph{topological net} is a set of the form $f^{-1}(\Sigma)$ where $f: \Chat \rightarrow \Chat$ is a branched cover and $\Sigma$ is a Jordan curve passing through all the critical (branched) values of $f$. 
\end{definition}

\begin{figure}
\centering
\captionsetup{width=.9\linewidth}
\begin{subfigure}{.5\textwidth}
  \centering
  \includegraphics[width=.9\linewidth]{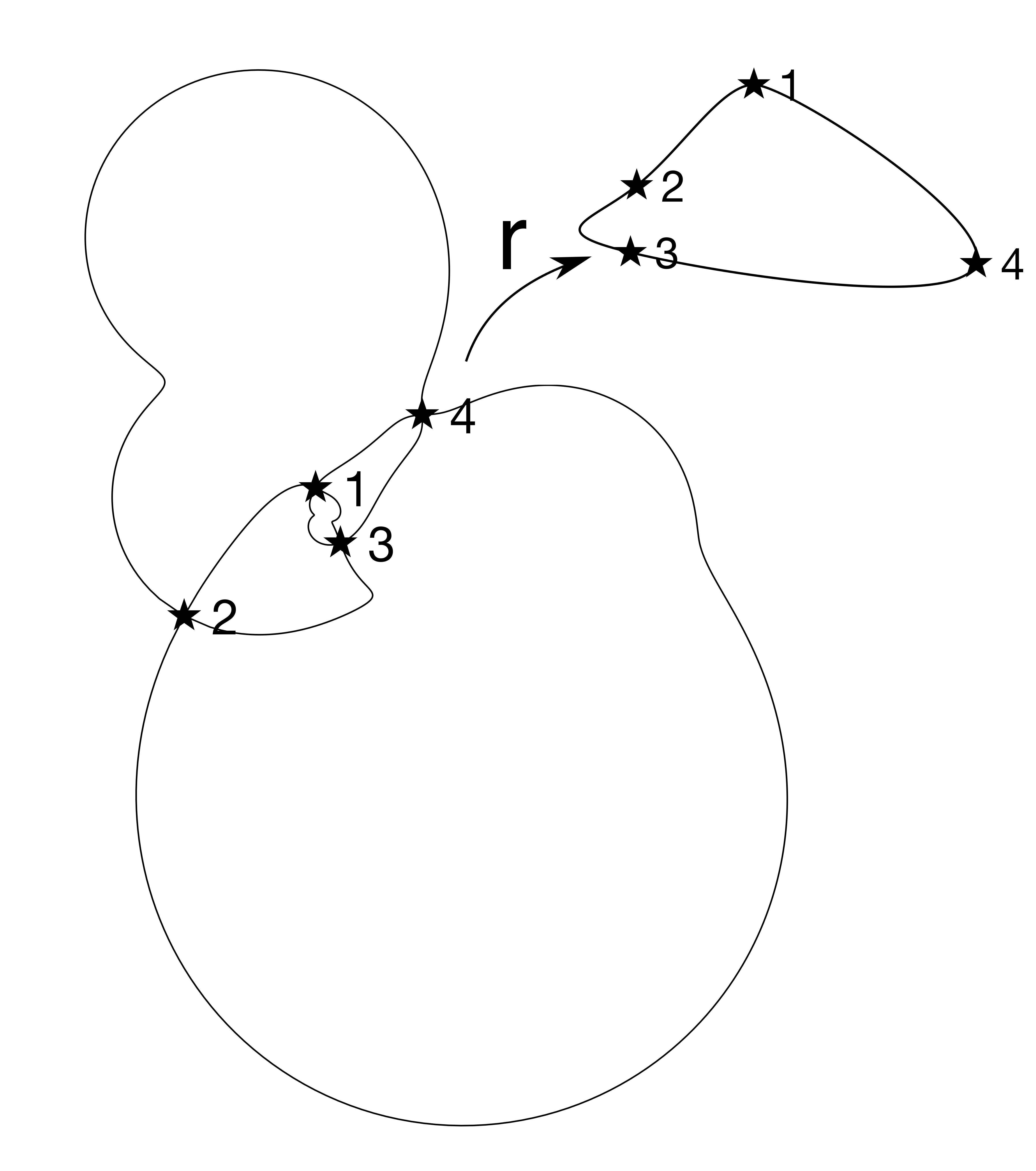}
  \caption{ }
  \label{fig:sub1}
\end{subfigure}%
\begin{subfigure}{.5\textwidth}
  \centering
  \includegraphics[width=.7\linewidth]{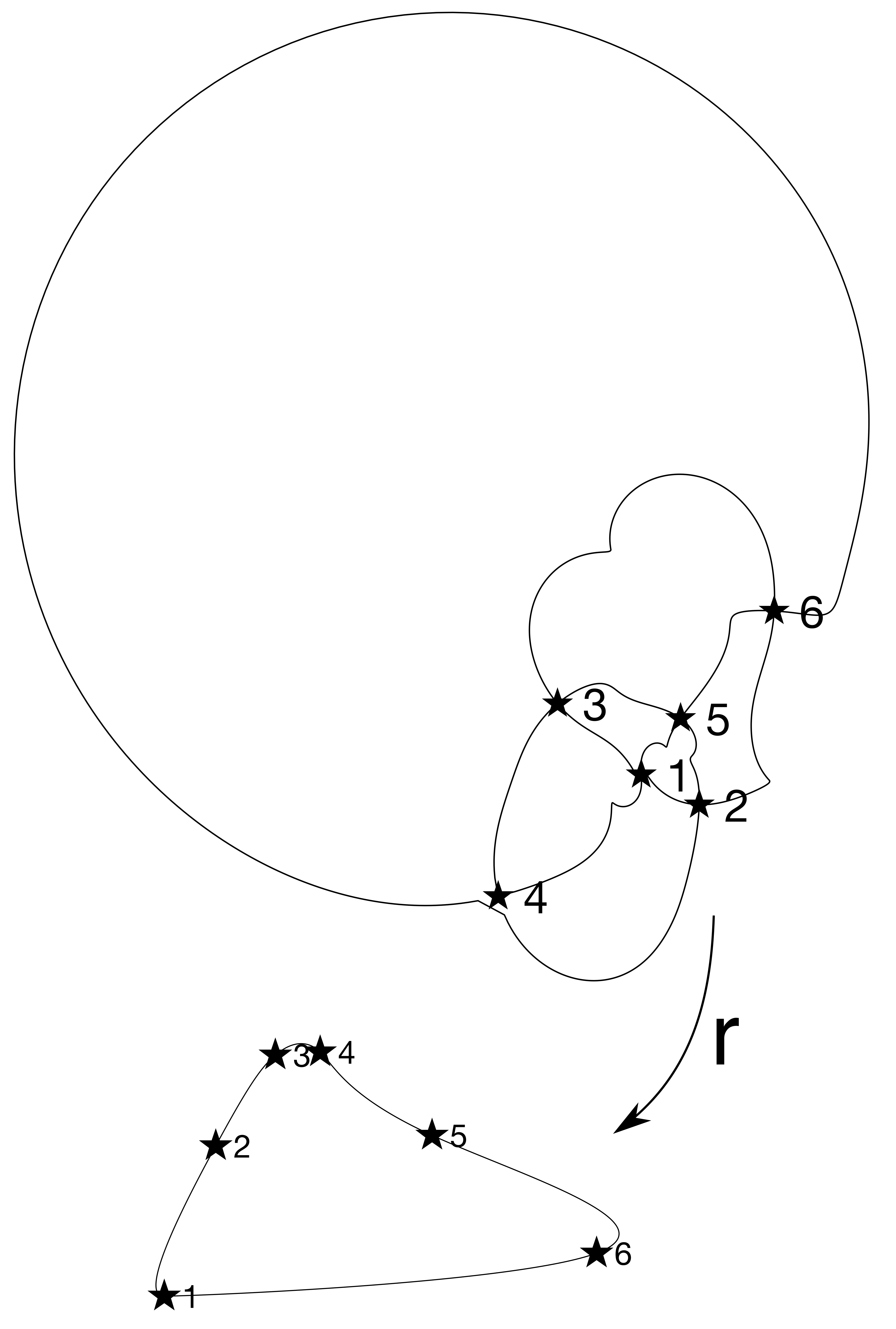}
  \caption{ }
  \label{fig:sub2}
\end{subfigure}
\caption{In each of (A) and (B) is pictured an analytic Jordan curve $\Gamma$ passing through the critical values of a rational map $r$, and the analytic net $r^{-1}(\Gamma)$. The critical points and values are marked with stars, and are numbered so that the $k^{\textrm{th}}$ critical point maps to the $k^{\textrm{th}}$ critical value. The example in (A) is of degree $3$, while the example in (B) is of degree $4$. As noted in Remark \ref{net_represents_map}, each rational map can be simply described as a conformal map of each component of $\Chat\setminus r^{-1}(\Gamma)$ onto a component of $\Chat\setminus\Gamma$, such that this piecewise definition extends continuously across the net. The nets and curves were produced in MATLAB.}
\label{two_beginning_ex}
\end{figure}

\begin{rem}\label{net_represents_map} A topological or analytic net affords a clear description of the underlying map: on each component of the complement of the net, the map is simply a homeomorphism onto one of the two components of the complement of the curve passing through the critical values (see Figure \ref{two_beginning_ex}).
\end{rem}

A net can naturally be viewed as a graph by placing vertices at each critical (branched) point, and in the generic case that each critical point is simple, each vertex of the graph has degree $4$. Thus, in light of Question \ref{thurston_question} and Remark \ref{net_represents_map}, one would like to know: 


\begin{question}\label{thuquest2} Which $4$-valent graphs in $\Chat$ are topological nets? Which analytic $4$-valent graphs in $\Chat$ are analytic nets?
\end{question}

\noindent Question \ref{thuquest2} has two parts, the topological part (the first half) and the analytic part (the second half). The topological part was answered in \cite{MR4205641} (for the non-generic setting, see \cite{2023arXiv230407207L}). In this paper we answer the analytic part.


Before stating our result precisely, we remark that while every analytic net is a topological net, not every topological net with analytic edges is an analytic net (see Remark \ref{counterexample}). This leads to the question: how does the space of (rigid) analytic nets fit within the larger space of (flexible) topological nets? We give one answer in the following theorem, which asserts density with respect to the Hausdorff metric $d_H(\cdot, \cdot)$.  We denote $N_{\varepsilon}(E):=\{z \in \Chat : \dist(z, E)<\varepsilon\}$ for $\varepsilon>0$ and $E\subset\Chat$, and we denote the critical (branched) points of a branched cover $f$ by $\textrm{CP}(f)$. 






\begin{thmx} \label{density_thm} \emph{For any topological net $f^{-1}(\Sigma)$ and $\varepsilon>0$, there exists an analytic net $r^{-1}(\Gamma)$ so that $d_H(f^{-1}(\Sigma), r^{-1}(\Gamma))<\varepsilon$ and $\emph{CP}(f)\subset N_{\varepsilon}(\emph{CP}(r))$.  }
\end{thmx}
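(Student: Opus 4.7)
The approach is quasiconformal surgery. First, since $\Sigma$ passes through only finitely many critical values of $f$, we can approximate it by an analytic Jordan curve $\Gamma$ through the same critical values with $d_H(\Sigma,\Gamma)<\varepsilon/3$. A small isotopy of $f$ supported in a tubular neighborhood of $\Sigma$ then yields a branched cover $\tilde f$ topologically equivalent to $f$ with $\textrm{CP}(\tilde f)=\textrm{CP}(f)$ and $\tilde f^{-1}(\Gamma)$ within Hausdorff distance $\varepsilon/3$ of $f^{-1}(\Sigma)$. By composing with a preliminary isotopy, we may arrange that the edges of $\tilde f^{-1}(\Gamma)$ have smooth geometry and the complementary faces are of bounded shape.

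Next, construct a quasiconformal branched cover $g:\Chat\to\Chat$ by piecewise-conformal surgery. Let $N:=\tilde f^{-1}(\Gamma)$. On each face $F$ of $N$, let $\psi_F$ denote a conformal homeomorphism of $F$ onto the appropriate component of $\Chat\setminus\Gamma$, chosen so that the vertices on $\partial F$ map to the corresponding critical values. The maps $\psi_F$ do not glue continuously across edges of $N$; to fix this, modify each $\psi_F$ inside a thin collar of $\partial F$ by a quasiconformal ``welding'' that adjusts the boundary parameterization to a common one on each edge, chosen so that the resulting map $g$ agrees with $\tilde f$ on $N$. Provided the induced boundary parameterizations of adjacent $\psi_F$'s are uniformly quasisymmetric on each edge, the welding can be chosen with $L^\infty$ dilatation uniformly bounded by some $K<\infty$, and supported in a set of arbitrarily small spherical area. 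The result is a quasiconformal branched cover $g$ satisfying $g^{-1}(\Gamma)=N$ and $\textrm{CP}(g)=\textrm{CP}(f)$ (the local degree at each critical point being automatically preserved by the surgery).

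Let $\phi:\Chat\to\Chat$ be the normalized quasiconformal solution to the Beltrami equation with coefficient $\mu_g$, so that $r:=g\circ\phi^{-1}$ is rational, with net $r^{-1}(\Gamma)=\phi(N)$ and critical points $\textrm{CP}(r)=\phi(\textrm{CP}(f))$. Since $\mu_g$ is uniformly bounded in $L^\infty$ with support of spherical area tending to $0$, standard continuity of the Beltrami equation ($\mu_n\to 0$ in $L^p$ for $p>2$ with $\|\mu_n\|_\infty$ bounded implies $\phi_n\to\textrm{id}$ uniformly) gives that $\phi$ can be made uniformly $\varepsilon/3$-close to the identity by shrinking the collars. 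Combining with the earlier estimates yields $d_H(f^{-1}(\Sigma),r^{-1}(\Gamma))<\varepsilon$ and $\textrm{CP}(f)\subset N_\varepsilon(\textrm{CP}(r))$.

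\textbf{Main obstacle.} The delicate step is the quasiconformal welding in the second paragraph: one must ensure the boundary parameterizations of the conformal maps $\psi_F$ are uniformly quasisymmetric across all edges of $N$ so that the collar dilatation is bounded by a single constant $K$ as the collars shrink. This is a global quantitative estimate requiring the preliminary topological net to be chosen with uniformly tame geometry, and it is especially subtle near the critical points, where $2k$ faces (at a critical point of local degree $k$) meet and the conformal maps have $k$-th root singularities at the vertex; the welding must be arranged to respect this singular boundary behavior while gluing consistently around the vertex.
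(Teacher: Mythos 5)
Your plan diverges fundamentally from the paper's, and the step you flag as the ``main obstacle'' is in fact a genuine gap that I do not believe can be closed within your framework.

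The problem is your claim that the collar welding can be chosen with $L^\infty$ dilatation bounded by a fixed $K$ while supported in a set of arbitrarily small area. Fix an edge $e$ of $N$ shared by faces $F$ and $F'$. The boundary values of the conformal maps $\psi_F$ and $\psi_{F'}$ parametrize the same arc of $\Gamma$, and the mismatch $h := \psi_{F'}^{-1}\circ\psi_F|_e : e \to e$ is a fixed homeomorphism (independent of the collar width) that is generically far from the identity. Any quasiconformal interpolation in a collar of width $\delta$ that equals the identity on the outer boundary and equals $h$ on $e$ has dilatation at least comparable to $\sup_e|h-\mathrm{id}|/\delta$ (the $\bar\partial$-derivative across the collar is forced to be of size $|h-\mathrm{id}|/\delta$ while the tangential derivative is order $1$). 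Thus, with $h$ fixed, you cannot simultaneously have bounded dilatation and shrinking support. Conversely, if you keep the support of positive area to keep $K$ bounded, the straightening map $\phi$ no longer converges to the identity, and the Hausdorff estimate fails. Moreover your construction would force $\deg r = \deg f$, whereas Theorem~\ref{char_thm} and Figure~\ref{fig:not_analytic_net} already show that a generic analytic topological net is not an analytic net and cannot even be one without changing the combinatorics; the theorem's one-sided inclusion $\mathrm{CP}(f)\subset N_\varepsilon(\mathrm{CP}(r))$ rather than an equality is a signal that new critical points are essential.

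The paper's proof resolves exactly this tension by introducing many extra critical points. It subdivides $G$ into a graph $G_n$ with vertices chosen so that for white faces $\phi_F$ maps them essentially to $n$-th roots of $\pm1$ (Definition~\ref{Gndefn} and Lemma~\ref{adjustingphif}), composes with the map $\iota_n(z)=z^n+\delta z\cdot\eta(z)$ which equals $z^n$ on $\mathbb{T}$ and has bounded dilatation (Lemma~\ref{interpolation}), and uses Bishop's quasiconformal folding (Theorem~\ref{complicated_folding_adjustment}) on the black faces with $m(n,F)\gg n$. Because $\iota_n$ restricts to the rigid map $z\mapsto z^n$ on the boundary circle, the edge parametrizations of adjacent faces agree exactly once the vertices land on roots of unity and the maps are length-multiplying -- no collar welding is needed. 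The dilatations of $\tilde\phi_F$, $\psi$, $\iota$, $\sigma$ are uniformly bounded and their supports shrink to zero area as $n\to\infty$, so the straightening maps $\phi_n$ converge to the identity (Lemma~\ref{converging_to_id}). The cost, which your proposal does not pay, is that $\deg r_n \to\infty$ and $\mathrm{CP}(r_n)$ is a much larger set than $\mathrm{CP}(f)$.
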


Since a net essentially determines the underlying map, Theorem \ref{density_thm} may be interpreted as saying that every branched cover has a rational function which is ``close'' to it, in the sense of Theorem \ref{density_thm}, although we remark the degree of $r$ in Theorem \ref{density_thm} will in general be much larger than the degree of $f$. We also remark that the related question of which branched covers are ``equivalent'' to rational maps is well studied and has generated a large literature (see \cite{MR1251582}, \cite{MR4125449}).




We will now describe our answer to the analytic part of Question \ref{thuquest2}. First we need some notation and definitions to state our result (Theorem \ref{char_thm} below) precisely.

\begin{notation} Given a Jordan domain $\Omega$ and a point $z_0\in\Omega$, we denote the harmonic measure of a Borel subset $I\subset\partial\Omega$ by $\omega(I, z_0, \Omega)$. If $\partial\Omega$ is rectifiable, then harmonic measure $\omega(\cdot, z, \Omega)$ and Lebesgue measure on $\partial\Omega$  are mutually absolutely continuous (see Theorem IV.1.2 of \cite{MR2450237}), and we will denote the Radon-Nikodym derivative of $\omega(\cdot, z, \Omega)$ with respect to Lebesgue measure at a point $x\in\partial\Omega$ by $d\omega(\cdot, z, \Omega)(x)$. 
\end{notation}





\begin{definition}\label{harmonic_parametrization} For any $4$-valent graph $G\subset\Chat$, we denote by $V(G)$ the set of vertices of $G$. We say a $4$-valent graph $G$ is \emph{analytic} if each edge of $G$ is a strict subset of an analytic curve, and the four angles of intersection at any vertex are all $\pi/2$. 
 A \emph{marking} of $G$ is a pair $(\mathcal{Z}, X)$, where $\mathcal{Z}$ is a set of points one in each face of $G$, and $X\subset G\setminus V(G)$ satisfies $|X\cap\partial F|=1$ for each face $F$ of $G$. We use the notation $z_F:=\mathcal{Z}\cap F$, and $x_F:=X\cap\partial F$ for any face $F$. 
\end{definition}


\begin{definition} Given a marking $(\mathcal{Z}, X)$ of an analytic graph $G$, we define a function $g_F$ on the boundary of each face $F$ of $G$ as follows. First, let $\gamma: [0,1] \rightarrow \partial F$ be a parametrization (counter-clockwise about $F$) satisfying $\gamma(0)=x_F$ and $\omega(\gamma([0,t]), z_F, F)=t$ for all $0\leq t \leq1$. For  $\gamma(t)\not\in V(G)$, we denote by $F_t$ the unique face neighboring $F$ with $\gamma(t)\in\partial F_t$. We set:
\begin{equation}\label{goverboundF} g_F(t):=\frac{d\omega(\cdot, z_{F_t}, F_t)(\gamma(t))}{d\omega(\cdot, z_F, F)(\gamma(t))} \textrm{ for } t\in[0,1]\setminus\{ t : \gamma(t)\in V(G) \}. \end{equation}
We denote the domain $[0,1]\setminus\{ t : \gamma(t)\in V(G) \}$ of $g_F$ by $\textrm{dom}(g_F)$.
\end{definition} 

\begin{rem} We use the notation $\mathcal{O}(\mathbb{T})$ for the class of analytic self-homeomorphisms of $\mathbb{T}:=\{z : |z|=1\}$; namely the self-homeomorphisms of $\mathbb{T}$ which extend to holomorphic functions in some neighborhood of $\mathbb{T}$. We also remark that any $4$-valent graph can always be $2$-colored, that is: each face can be assigned one of two colors such that any two faces which share an edge have different colors. Moreover there are exactly two $2$-colorings of any $4$-valent graph. Our convention will be to use black and white for the $2$ colors. 
\end{rem}

Our answer to the analytic part of Question \ref{thuquest2} is the following, which roughly says an analytic graph is an analytic net if and only if there exists a choice of base points for harmonic measure in each face of $G$ so that the ratio of harmonic measures from either side of the boundary of a face is the same over all faces:

\begin{thmx}\label{char_thm}  \emph{ Let $G$ be a $2$-colored, analytic $4$-valent graph. Then $G$ is an analytic net if and only if there exists a marking $(\mathcal{Z}, X)$ of $G$ and $\kappa\in \mathcal{O}(\mathbb{T})$ so that for each white face $F$ we have $g_F(t)=|\kappa'(e^{2\pi it})|$ for all $t\in\emph{dom}(g_F)$.  }
\end{thmx}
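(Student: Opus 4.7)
The plan is to use conformal welding along $\kappa$ to parametrize $\Gamma$ and then assemble $r$ face-by-face on $G$.

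\emph{Necessity.} Suppose $G=r^{-1}(\Gamma)$ and let $\Omega_W,\Omega_B$ be the two components of $\Chat\setminus\Gamma$. Fix $w_W\in\Omega_W$, $w_B\in\Omega_B$ and a regular point $x_0\in\Gamma$, and set $\mathcal{Z}:=r^{-1}(\{w_W,w_B\})$, $X:=r^{-1}(x_0)$; since $r$ restricts to a conformal bijection of each face onto $\Omega_W$ or $\Omega_B$, each face meets $\mathcal{Z}$ in one point and $X$ in one boundary point. Let $\Phi_c:\D\to\Omega_c$ be the Riemann map with $\Phi_c(0)=w_c,\ \Phi_c(1)=x_0$. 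Since $\Gamma$ is analytic, both $\Phi_W$ and $\Phi_B$ extend across $\mathbb{T}$ by Schwarz reflection, so $\kappa:=\Phi_B^{-1}\circ\Phi_W\in\mathcal{O}(\mathbb{T})$. Writing $r|_F=\Phi_c\circ\phi_F$ with $\phi_F:F\to\D$ the Riemann map sending $z_F\mapsto 0,\ x_F\mapsto 1$, one has $\phi_F(\gamma_F(t))=e^{2\pi it}$. Using $|(r|_F)'(z)|=|(r|_{F'})'(z)|$ at non-vertex shared boundary points together with the harmonic-measure change-of-variables formula, $g_F(t)$ reduces to the density ratio $d\omega(\cdot,w_B,\Omega_B)/d\omega(\cdot,w_W,\Omega_W)$ at $\Phi_W(e^{2\pi it})$; the identity $d\omega(\cdot,\Phi_c(0),\Omega_c)(\Phi_c(\zeta))=\tfrac{1}{2\pi|\Phi_c'(\zeta)|}$ and the relation $\Phi_W=\Phi_B\circ\kappa$ then give $g_F(t)=|\kappa'(e^{2\pi it})|$.

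\emph{Sufficiency.} Given $(\mathcal{Z},X,\kappa)$ satisfying the hypothesis, after multiplying $\kappa$ by a suitable rotation (which does not change $|\kappa'|$) we may assume $\kappa(1)=1$. Invoking classical conformal welding for an analytic welding function yields an analytic Jordan curve $\Gamma\subset\Chat$ with Riemann maps $\Phi_W,\Phi_B:\D\to\Omega_W,\Omega_B$ satisfying $\Phi_B^{-1}\circ\Phi_W=\kappa$; put $w_c:=\Phi_c(0)$ and $x_0:=\Phi_W(1)=\Phi_B(1)$. For each face $F$ of color $c$, define $r_F:=\Phi_c\circ\phi_F$, where $\phi_F:F\to\D$ is the Riemann map with $\phi_F(z_F)=0,\ \phi_F(x_F)=1$, and let $r:\Chat\to\Chat$ be the patched map. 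Once continuity of $r$ across every edge is established, $r$ is holomorphic on $\Chat\setminus V(G)$ by Schwarz reflection across the analytic curve $\Gamma$ and the analytic arcs of $G$, continuous and bounded near each vertex, and hence rational by Riemann's removable-singularity theorem; the identity $r^{-1}(\Gamma)=G$ is then immediate from the face-wise definition.

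The main obstacle is continuity across each edge. On an edge $e$ shared by white $F$ and black $F'$, continuity is equivalent to $e^{2\pi iu(t)}=\kappa(e^{2\pi it})$, where $u(t)$ is the reparametrization between the $F$- and $F'$-harmonic-measure parametrizations of $e$. A direct computation identifies $g_F(t)=|u'(t)|$, so the hypothesis forces $|u'(t)|=|\kappa'(e^{2\pi it})|$; since both $u$ and the continuous branch $h(t):=\tfrac{1}{2\pi i}\log\kappa(e^{2\pi it})$ are orientation-reversing, integration yields $u(t)-h(t)\equiv C_e\in\mathbb{R}$ on each edge, and the required continuity amounts to $C_e\in\mathbb{Z}$. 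Equivalently, at each vertex $v$ the four continuous extensions $r_{F_i}(v)$ must agree on a single critical value of $\Gamma$. The plan is to normalize $C_{e_0}=0$ on one reference edge (by adjusting one marker in $X$, or absorbing into the welding) and then to propagate the integer condition vertex-by-vertex, using that the hypothesis holds along the \emph{entire} boundary of every white face together with the $\pi/2$-angle condition at each vertex (which locally forces a $z^2$-model for $r$). This vertex-propagation is the main technical step.
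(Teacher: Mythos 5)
Your necessity proof is essentially the paper's argument: the same choice of $\mathcal{Z}=r^{-1}\{w_W,w_B\}$ and $X=r^{-1}(x_0)$, the same change-of-variables for harmonic-measure densities, and the same chain-rule step using $\phi_F(\gamma(t))=e^{2\pi it}$ to reduce the ratio to $|\kappa'(e^{2\pi it})|$. (You make explicit the cancellation $|(r|_F)'|=|(r|_{F'})'|$ across $G$; the paper uses this implicitly when applying its Proposition on change-of-variables to both faces.)

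For sufficiency the reduction you carry out is correct as far as it goes: $g_F(t)=|u'(t)|$, the hypothesis pins down the reparametrization $u$ only up to an additive constant $C_e$ per edge, and continuity of the face-wise map $r$ across $e$ is precisely the assertion $C_e\in\mathbb{Z}$. But you then label the remaining work (``this vertex-propagation is the main technical step'') and stop. That is not a deferrable detail --- it is the entire content of this direction of the theorem. As written, you obtain continuity of $r$ only across the marked edges (where $u(0)=0$ and $h(0)=0$ give $C_{e_0}=0$ for free), and you have said nothing that forces $C_e\in\mathbb{Z}$ on any other edge. Your sketch of how to proceed --- a $z^2$-model at each $4$-valent vertex, invoking the hypothesis on both incident white faces --- is plausible in spirit, but it is not carried out, and it is not evident how the $g_{F''}$-hypothesis on the second white face controls the jump of $u$ at the vertex.

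The paper's sufficiency proof avoids vertex-by-vertex bookkeeping entirely by integrating globally over each white face. Fix a white face $F$, parametrize $\partial F$ by harmonic measure $\gamma:[0,1]\to\partial F$ with $\gamma(0)=x_F$, and set $s_t:=\omega(r_i\circ\gamma([0,t]),\infty,\textrm{ext}(\Sigma))$ using the interior boundary trace $r_i$ of $r|_F$. A short chain of substitutions --- using the hypothesis $g_F=|\kappa'|$ exactly once, together with the change-of-variables formula for harmonic-measure densities and the chain rule --- rewrites $s_t$ as $\int_0^t d\omega(\cdot,\infty,\textrm{ext}(\Sigma))(r_o\circ\gamma(x))\,|(r_o\circ\gamma)'(x)|\,dx$, the analogous quantity for the exterior trace $r_o$. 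Since the marking and the welding normalization force $r_i\circ\gamma(0)=r_o\circ\gamma(0)=\phi_0^{-1}(1)$, this equality of ``harmonic arclengths'' yields $r_i\circ\gamma(t)=r_o\circ\gamma(t)$ for all $t$, i.e.\ continuity across all of $\partial F$ at once, using the hypothesis on the single face $F$. This global integral is exactly the propagation you wanted, done in one stroke; replacing your per-edge constants $C_e$ and the missing vertex argument with it closes the gap in your proposal.
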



\begin{rem}\label{counterexample} Recall that harmonic measure $\omega(I, z_0, \Omega)$ coincides with the probability that a Brownian motion started at $z_0$ exits $\Omega$ through $I$. With this in mind, Theorem \ref{char_thm} can be used to help identify which analytic $4$-valent graphs are analytic nets (see Figure \ref{not_analytic_net} and its caption).
\end{rem}

We conclude the introduction by briefly describing the proofs of Theorems \ref{density_thm} and \ref{char_thm}, starting with Theorem \ref{char_thm}. If $G=r^{-1}(\Gamma)$ is an analytic net, we prove that $\kappa$ is the conformal welding (see Definition \ref{conf_welding_defn}) associated to the curve $\Gamma$, and we may take $\mathcal{Z}$ to be the preimage (under $r$) of two points in different components of $\Chat\setminus\Gamma$, and $X$ to be the preimage (under $r$) of a non-critical value on $\Gamma$. Conversely, suppose we are given $G$, $(\mathcal{Z}, X)$, $\kappa$ satisfying the conditions in Theorem \ref{char_thm}. The Measurable Riemann Mapping Theorem (henceforth abbreviated MRMT) implies the existence of an analytic Jordan curve $\Gamma$ such that $\kappa$ is the derivative of a conformal welding associated to $\Gamma$. We then prove that the hypotheses on $G$, $(\mathcal{Z}, X)$, $\kappa$ imply that the map defined by mapping conformally each face of $G$ to one of the two components of $\Chat\setminus\Gamma$ extends continuously across $G$ and hence is rational.  

Now we turn to a description of the proof of Theorem \ref{density_thm}. Given the graph $G:=f^{-1}(\Sigma)$, we define a map $g$ in the union of the faces of $G$ as follows. In each white face of $G$, $g$ is a conformal map to $\mathbb{D}$ followed by $z\mapsto z^n+\delta z$ for large $n$ and small $\delta$, and in each black face the map $g$ is a conformal map to $\mathbb{D}^*:=\Chat\setminus\overline{\mathbb{D}}$ followed by $z\mapsto z^m+\delta z$ for $m\gg n$. There is a curve $\Gamma$ running through the critical values of $g$ so that $g^{-1}(\Gamma)\approx G$, however $g$ does not extend continuously across $G$. We prove that there is an arbitrarily small neighborhood of $G$ in which we can quasiregularly interpolate between the definitions of $g$ in different faces, with dilatation bounded independently of how small the neighborhood of $G$ is. Thus the MRMT implies the existence of a quasiconformal $\phi:\Chat\rightarrow\Chat$ so that $r:=g\circ\phi^{-1}$ is rational and $r\approx g$, and hence $r^{-1}(\Gamma)\approx g^{-1}(\Gamma)\approx G$. The details of the interpolation are rather technical and rely in part on some existing results, including several lemmas from \cite{Bis15} and \cite{MR4041106}.

\begin{figure}[ht!]
{\includegraphics[width=.7\textwidth]{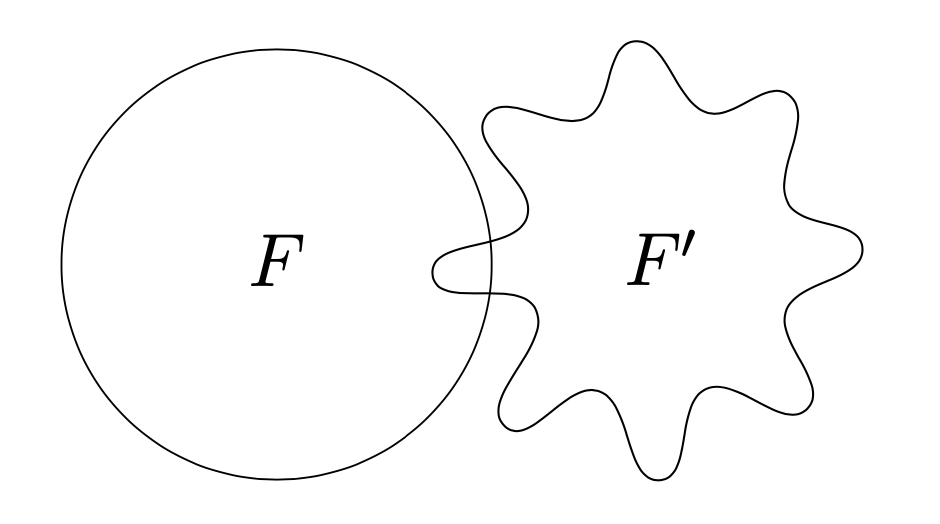}}
	\caption{Pictured is a sketch of a topological net with analytic edges. Theorem \ref{char_thm} implies that this topological net is not an analytic net since the function $g_{F'}$ will have more local extrema than the function $g_F$.  }
\label{fig:not_analytic_net}
\end{figure}

\section{Proof of Theorem \ref{char_thm}}

\begin{definition}\label{conf_welding_defn} Let $\Gamma\subset\Chat$ be a Jordan curve, and let $\phi_1$, $\phi_2$ denote Riemann mappings of $\mathbb{D}$, $\mathbb{D}^*$ onto the two components of $\Chat\setminus\Gamma$. The mappings $\phi_1$, $\phi_2$ both extend to $\mathbb{T}$, and the homeomorphism $\phi_2^{-1}\circ\phi_1: \mathbb{T} \rightarrow \mathbb{T}$ is called a \emph{conformal welding}. We denote the class of all such homeomorphisms (obtained by considering all Jordan curves) by $\textrm{Weld}$. We set $\textrm{Weld}_A$ to be the class of conformal weldings obtained by considering only analytic Jordan curves $\Gamma$.
\end{definition} 

\begin{prop}\label{confweldingequiv} $\mathcal{O}(\mathbb{T})=\textrm{Weld}_A$. 
\end{prop}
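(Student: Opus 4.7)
The plan is to establish both inclusions $\textrm{Weld}_A \subseteq \mathcal{O}(\mathbb{T})$ and $\mathcal{O}(\mathbb{T}) \subseteq \textrm{Weld}_A$ separately.

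For $\textrm{Weld}_A \subseteq \mathcal{O}(\mathbb{T})$, I would let $\Gamma$ be an analytic Jordan curve with Riemann maps $\phi_1: \mathbb{D} \to \Omega_1$ and $\phi_2: \mathbb{D}^* \to \Omega_2$ onto the two components of $\Chat \setminus \Gamma$. Since $\Gamma$ is an analytic curve, the Schwarz reflection principle extends each $\phi_i$ to a univalent holomorphic map on an annular neighborhood of $\mathbb{T}$. The composition $h := \phi_2^{-1} \circ \phi_1$ therefore extends to a biholomorphism of an annular neighborhood of $\mathbb{T}$ onto another such neighborhood, placing $h$ in $\mathcal{O}(\mathbb{T})$.

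For $\mathcal{O}(\mathbb{T}) \subseteq \textrm{Weld}_A$, the strategy is a gluing argument. Given $h \in \mathcal{O}(\mathbb{T})$, the hypothesis furnishes a univalent holomorphic extension of $h$ to some annulus $A_r = \{r < |z| < 1/r\}$ with $r < 1$. I would form a topological sphere $S$ from disjoint copies $\Chat_1, \Chat_2$ of $\Chat$ by keeping $\overline{\mathbb{D}} \subset \Chat_1$ and $\overline{\mathbb{D}^*} \subset \Chat_2$ and identifying $z \in \mathbb{T} \subset \Chat_1$ with $h(z) \in \mathbb{T} \subset \Chat_2$. Away from the seam, use the inherited complex charts. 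Near a point $p \in \mathbb{T}$ represented by $z_0 \in \Chat_1$, define a chart on a neighborhood of $p$ by $w \mapsto w$ on the $\Chat_1$ side and $w \mapsto h^{-1}(w)$ on the $\Chat_2$ side. Since $h$ is biholomorphic on $A_r$, all transition functions between this chart and the inherited ones are holomorphic, so $S$ carries the structure of a Riemann surface.

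By the uniformization theorem there exists a biholomorphism $\psi: S \to \Chat$. I would set $\Gamma := \psi(\mathbb{T})$ and let $\phi_1, \phi_2$ be the restrictions of $\psi$ to the copies of $\overline{\mathbb{D}}$ and $\overline{\mathbb{D}^*}$ inside $S$, yielding Riemann maps onto the two components of $\Chat \setminus \Gamma$. Reading $\psi$ through the chart near $z_0 \in \mathbb{T}$ shows that $\phi_1$ admits a holomorphic extension across $\mathbb{T}$, so $\Gamma$ is an analytic Jordan curve. Finally, for $z \in \mathbb{T}$ the points $z \in \Chat_1$ and $h(z) \in \Chat_2$ are identified in $S$, so $\phi_1(z) = \phi_2(h(z))$, exhibiting $h$ as the conformal welding of $\Gamma$.

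I expect the one genuinely technical step to be verifying that the chart structure near $\mathbb{T}$ is well-defined and compatible with the inherited charts, but this follows routinely from the biholomorphicity of $h$ on $A_r$. The remainder is an application of uniformization plus the observation that the explicit chart near $\mathbb{T}$ directly yields the holomorphic extensions needed to conclude that $\Gamma$ is analytic.
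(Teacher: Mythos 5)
Your first inclusion ($\textrm{Weld}_A \subseteq \mathcal{O}(\mathbb{T})$) is exactly the paper's argument via Schwarz reflection. For the converse, however, you take a genuinely different route. The paper extends $\kappa \in \mathcal{O}(\mathbb{T})$ to a quasiconformal self-map of $\mathbb{D}$, pushes its Beltrami coefficient (extended by zero) through the Measurable Riemann Mapping Theorem to obtain a global quasiconformal $\phi$, sets $\Sigma := \phi(\mathbb{T})$, and reads off the welding factorization $\kappa = (\kappa\circ\phi^{-1})\circ\phi$; analyticity of $\Sigma$ comes from $\mu \equiv 0$ near $\mathbb{T}$. You instead build an abstract Riemann surface by gluing $\overline{\mathbb{D}}$ to $\overline{\mathbb{D}^*}$ along $\mathbb{T}$ via the biholomorphic extension of $h$, equip the seam with charts that flatten it to a genuine analytic arc, and invoke uniformization to map the resulting genus-zero surface to $\Chat$. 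Both arguments are correct and of comparable depth (uniformization and MRMT being closely related tools); the paper's version is shorter, more explicit, and leans on MRMT, which it already uses heavily elsewhere, while yours is more geometric and makes the ``why an analytic welding yields an analytic curve'' step transparent through the seam chart. One small point worth making explicit in your write-up: $h \in \mathcal{O}(\mathbb{T})$ is automatically orientation-preserving on $\mathbb{T}$ (an orientation-reversing homeomorphism cannot extend holomorphically), and hence its univalent holomorphic extension to an annulus sends the inner component to the inner component; this is what makes your seam chart actually cover a full neighborhood of $z_0$ rather than a half-disk.
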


\begin{proof} Let $\phi_2^{-1}\circ\phi_1\in\textrm{Weld}_A$ be a conformal welding associated to an analytic Jordan curve $\Gamma$. Both $\phi_1$, $\phi_2$ extend analytically across $\mathbb{T}$ by the Schwarz Reflection principle for analytic arcs, and hence $\phi_2^{-1}\circ\phi_1\in\mathcal{O}(\mathbb{T})$. 

Conversely, given $\kappa\in\mathcal{O}(\mathbb{T})$, $\kappa$ may extended to a $K$-quasiconformal mapping $\kappa: \mathbb{D}\rightarrow\mathbb{D}$. Consider the Beltrami coefficient $\mu$ defined by $\mu(z):=\kappa_{\overline{z}}(z)/\kappa_z(z)$ for $z\in\mathbb{D}$, and $\mu(z)=0$ for $z\not\in\mathbb{D}$. The MRMT implies there exists a $K$-quasiconformal homeomorphism $\phi: \Chat\rightarrow\Chat$ solving the equation $\phi_{\overline{z}}(z)=\mu(z)\cdot\phi_{z}(z)$ a.e.. Let $\Sigma:=\phi(\mathbb{T})$. The identity
\[ \kappa=\kappa\circ\phi^{-1}\circ\phi \] 
shows that $\kappa$ is a conformal welding associated to $\Sigma$, since $\phi: \mathbb{D}^* \rightarrow \phi(\mathbb{D}^*)$ is conformal (since $\mu\equiv0$ in $\mathbb{D}^*$), and $\kappa\circ\phi^{-1}=(\phi\circ\kappa^{-1})^{-1}$ with $\phi\circ\kappa^{-1}: \mathbb{D} \rightarrow \phi(\mathbb{D})$ conformal (since $\mu(z):=\kappa_{\overline{z}}(z)/\kappa_z(z)$ for $z\in\mathbb{D}$). Thus $\kappa$ is a conformal welding, and so it remains to show that the curve $\Sigma$ is analytic, which follows since $\kappa$ is holomorphic in a neighborhood of $\mathbb{T}$, and hence so is $\phi$. 
\end{proof}

\noindent Thus, by Proposition \ref{confweldingequiv}, Theorem \ref{char_thm} is equivalent to the following: 

\begin{thm}\label{char_thm_mod} Let $G$ be an analytic $4$-valent graph. Then $G$ is an analytic net if and only if there exists a marking $(\mathcal{Z}, X)$ of $G$ and $\kappa\in\textrm{Weld}_A$ so that for each white face $F$ we have $g_F(t)=|\kappa'(e^{2\pi it})|$ for all $t\in\emph{dom}(g_F)$. 
\end{thm}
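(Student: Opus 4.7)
The plan is to prove each direction of Theorem \ref{char_thm_mod} via the interpretation of $\kappa$ as the conformal welding of $\Gamma$, and of the normalized Riemann mapping $\psi_F$ of each face $F$ as the factor through which a sought rational $r$ decomposes as $r|_F = \phi_{c(F)}\circ\psi_F$, where $\phi_1,\phi_2$ are the Riemann mappings of the two components $\Omega_1,\Omega_2$ of $\Chat\setminus\Gamma$ and $c(F)\in\{1,2\}$ indicates the color of $F$.

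For the forward direction ($G=r^{-1}(\Gamma)$ given), I fix a non-critical value $v\in\Gamma$ and base points $w_i\in\Omega_i$, normalize $\phi_1,\phi_2$ so that $\phi_1(0)=w_1$, $\phi_2(\infty)=w_2$, $\phi_1(1)=\phi_2(1)=v$, and take $\kappa:=\phi_2^{-1}\circ\phi_1\in\textrm{Weld}_A$, $\mathcal{Z}:=r^{-1}(\{w_1,w_2\})$, $X:=r^{-1}(v)$. Each face contains exactly one preimage of $w_{c(F)}$ and its boundary contains exactly one preimage of $v$ (since $r|_F$ is a conformal bijection), so $(\mathcal{Z},X)$ is a valid marking. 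As $r|_F:F\to\Omega_{c(F)}$ is conformal with $r(z_F)=w_{c(F)}$, it pushes harmonic measure forward: $d\omega(\cdot,z_F,F)(x) = d\omega(\cdot,w_{c(F)},\Omega_{c(F)})(r(x))\,|r'(x)|$, and analogously for the adjacent face $F'$. The common $|r'|$ cancels in the ratio $g_F(t)$, and using $d\omega(\cdot,w_i,\Omega_i)=(2\pi|\phi_i'|)^{-1}$ on $\Gamma$ together with the chain rule $|\kappa'(\zeta)|=|\phi_1'(\zeta)|/|\phi_2'(\kappa(\zeta))|$ yields $g_F(t) = |\kappa'(e^{2\pi it})|$.

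For the backward direction, I apply Proposition \ref{confweldingequiv} to $\kappa$ to obtain $\Gamma$ and $\phi_1,\phi_2$ with $\kappa$ as welding, normalized so that $\kappa(1)=1$. For each face $F$, let $\psi_F$ be the Riemann map of $F$ onto $\mathbb{D}$ (if white) or $\mathbb{D}^*$ (if black) with $\psi_F(z_F)\in\{0,\infty\}$ and $\psi_F(x_F)=1$, and set $r|_F:=\phi_{c(F)}\circ\psi_F$. This $r$ is holomorphic on $\Chat\setminus G$ and maps each face homeomorphically onto its corresponding component of $\Chat\setminus\Gamma$. If $r$ extends continuously across $G$, Morera's theorem applied across each analytic edge extends $r$ holomorphically to $\Chat\setminus V(G)$; near each $\pi/2$ vertex $v$ the boundedness of $r$ (in fact $r(z)=r(v)+C(z-v)^2+O((z-v)^3)$ by the corner condition) yields removability via Riemann's theorem, so $r$ is meromorphic on $\Chat$, i.e., rational. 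By construction $r^{-1}(\Gamma)\supseteq G$ with equality by a degree count (each face maps homeomorphically to a full component of $\Chat\setminus\Gamma$), and the critical values of $r$ lie on $\Gamma$, so $G$ is an analytic net.

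The key step is therefore the continuity of $r$ across $G$. On an edge $E$ shared by a white face $F$ and a black face $F'$, I parametrize $E$ by harmonic measure from both sides ($\gamma_F,\gamma_{F'}$); these traverse $E$ in opposite directions, since the counterclockwise orientations of $F$ and $F'$ are opposite along $E$, so that $\psi_{F'}(\gamma_{F'}(s))=e^{-2\pi is}$. The hypothesis $g_F(t)=|\kappa'(e^{2\pi it})|=\psi'(t)$ (where $\psi:\mathbb{R}\to\mathbb{R}$ is a lift of $\kappa$) gives the ODE $dt_{F'}/dt_F=-\psi'(t_F)$ along $E$, whence $t_{F'}(\gamma_F(t))+\psi(t)\equiv c_E\pmod 1$ is constant on $E$. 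Continuity of $r$ on $E$ is equivalent to $c_E\equiv 0\pmod 1$. When $E$ is a marked edge (i.e.\ $x_F=x_{F'}\in E$), the initial values $t_F(x_F)=t_{F'}(x_{F'})=0$ together with $\psi(0)=0$ immediately yield $c_E=0$. For the other edges I propagate the vanishing across $\pi/2$ vertices using the local $(z-v)^2$-behavior of each $\psi_F$ at the corner and the analyticity of $\kappa$ on $\mathbb{T}$, which together enforce the rotational compatibility of the four Riemann maps incident to each vertex. The main technical obstacle is executing this vertex-by-vertex propagation cleanly, which will require careful use of the matching combinatorics of $X$ together with the analyticity hypotheses on both $G$ and $\kappa$.
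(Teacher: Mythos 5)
Your forward direction matches the paper's (Theorem \ref{first_half_of_thm}) almost exactly: the same choice $\mathcal{Z}=r^{-1}(\{w_1,w_2\})$, $X=r^{-1}(v)$, the identity $d\omega(\cdot,w_i,\Omega_i)=(2\pi|\phi_i'|)^{-1}$, and the chain rule applied to $\kappa=\phi_2^{-1}\circ\phi_1$. The converse also begins the same way as the paper's Theorem \ref{sec_half_of_thm}: obtain $\Sigma$ from $\kappa$ via Proposition \ref{confweldingequiv}, define $r$ face-by-face using the marked normalized Riemann maps, and reduce to proving $r$ extends continuously across $G$ (removability then finishes). But at the crucial step you leave a genuine gap, which you yourself flag: your per-edge ODE gives $t_{F'}+\psi(t_F)\equiv c_E\ (\mathrm{mod}\ 1)$ with $c_E=0$ only on the edge carrying $x_F=x_{F'}$, and on the other edges of $\partial F$ the integration constant $c_E$ is undetermined unless one already knows that the boundary values of $r$ from the two black faces incident to a vertex of $\partial F$ agree there --- which is precisely the claim. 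The appeal to the $(z-v)^2$ corner behaviour and analyticity of $\kappa$ does not by itself fix the rotational normalization of the adjacent Riemann map, so the ``vertex-by-vertex propagation'' is not merely a technicality to be cleaned up; it is the heart of the matter and is not carried out.

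The paper sidesteps this propagation. Rather than integrating the matching ODE on each edge and trying to stitch the constants across vertices, it considers the single cumulative quantity $s_t:=\omega(r_i\circ\gamma([0,t]),\infty,\mathrm{ext}(\Sigma))$ over the full boundary circuit $t\in[0,1]$ of a white face, uses the hypothesis $g_F(t)=|\kappa'(e^{2\pi i t})|$ pointwise inside the integral to convert $s_t$ into the corresponding cumulative $\omega$-length of the outer boundary values $r_o\circ\gamma$, and then invokes the single normalization $r_i\circ\gamma(0)=r_o\circ\gamma(0)$ at $x_F$ together with the resulting equality $\omega(r_i\circ\gamma([0,t]))=\omega(r_o\circ\gamma([0,t]))$ for every $t$ simultaneously. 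This global identity forces $r_i\circ\gamma=r_o\circ\gamma$ on all of $[0,1]$ at once, so the interior vertices of $\partial F$ never need to be matched separately. If you want to retain your edge-by-edge bookkeeping, you should replace the unexecuted vertex propagation by this cumulative-integral argument (or, equivalently, note that the total $\omega$-length of $r_o\circ\gamma$ over one circuit equals $1$, which rules out any forward or backward jump of $r_o\circ\gamma$ at a vertex and hence forces $c_E=0$ on every edge).
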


 We will break Theorem \ref{char_thm_mod} into two parts: necessity (Theorem \ref{first_half_of_thm}) and sufficiency (Theorem \ref{sec_half_of_thm}). The proofs of both parts will use the following fact (Proposition \ref{form_of_deriv})

\begin{prop}\label{form_of_deriv} Let $\Omega$ be a Jordan domain with piecewise-analytic boundary, and $\Omega'$ a Jordan domain with analytic boundary. Then, for any conformal map $\phi: \Omega \rightarrow \Omega'$ and $z\in\Omega$, we have
\begin{equation} d\omega(\cdot, z, \Omega)(\zeta)=|\phi'(\zeta)|\cdot d\omega(\cdot, \phi(z), \Omega')(\phi(\zeta))
\end{equation}
for all smooth points $\zeta\in\partial\Omega$.
\end{prop}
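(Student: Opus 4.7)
The plan is to combine conformal invariance of harmonic measure with local analytic extension of $\phi$ across smooth boundary points, and then to pass from an integrated equality to the pointwise identity using the Lebesgue differentiation theorem.

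First, since $\Omega$ and $\Omega'$ are Jordan domains, Carath\'eodory's theorem extends $\phi$ to a homeomorphism $\overline{\Omega}\to\overline{\Omega'}$. Conformal invariance of harmonic measure then gives
\[ \omega(I, z, \Omega)=\omega(\phi(I), \phi(z), \Omega') \]
for every Borel set $I\subset\partial\Omega$. Writing each side as the integral of its Radon--Nikodym derivative with respect to arc-length $|d\eta|$ on $\partial\Omega$ and $|d\xi|$ on $\partial\Omega'$, respectively, we obtain
\[ \int_I d\omega(\cdot, z, \Omega)(\eta)\,|d\eta|
   \;=\;\int_{\phi(I)} d\omega(\cdot, \phi(z), \Omega')(\xi)\,|d\xi|. \]

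Second, let $\zeta\in\partial\Omega$ be a smooth point, meaning that $\partial\Omega$ coincides with an analytic arc in some neighborhood of $\zeta$. Since $\partial\Omega'$ is analytic near $\phi(\zeta)$, reflecting $\Omega$ across its analytic boundary arc near $\zeta$ and $\Omega'$ across its analytic boundary arc near $\phi(\zeta)$, the Schwarz reflection principle provides a holomorphic continuation of $\phi$ to a full open neighborhood of $\zeta$, with $\phi'(\zeta)\neq 0$. Consequently the change of variables $\xi=\phi(\eta)$ is a smooth diffeomorphism of a small arc about $\zeta$ onto a small arc about $\phi(\zeta)$, and arc-length transforms as $|d\xi|=|\phi'(\eta)|\,|d\eta|$.

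Third, applying this change of variables to the right-hand integral above (on sufficiently small arcs $I$ about $\zeta$) yields
\[ \int_I d\omega(\cdot, z, \Omega)(\eta)\,|d\eta|
   \;=\;\int_I |\phi'(\eta)|\,d\omega(\cdot, \phi(z), \Omega')(\phi(\eta))\,|d\eta|. \]
Both integrands are continuous in $\eta$ near the smooth point $\zeta$: the density $d\omega(\cdot, z, \Omega)$ is real-analytic on analytic arcs of $\partial\Omega$ (it is the normal derivative of the Green's function, which extends harmonically across the arc by Schwarz reflection), and likewise for $d\omega(\cdot, \phi(z), \Omega')(\phi(\eta))$. Shrinking $I$ to $\{\zeta\}$ and applying the Lebesgue differentiation theorem produces the pointwise identity claimed.

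The main obstacle is the second step: once the analytic extension of $\phi$ across smooth arcs of $\partial\Omega$ is secured, the remainder is a formal computation. The hypothesis that $\partial\Omega'$ is fully analytic (not merely piecewise analytic) is crucial here, because it lets us reflect $\Omega'$ everywhere and so obtain the extension of $\phi$ at every smooth point of $\partial\Omega$, irrespective of where its image $\phi(\zeta)$ lies on $\partial\Omega'$.
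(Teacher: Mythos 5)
Your proof is correct and takes essentially the same approach as the paper, which simply states ``This follows from conformal invariance of harmonic measure'' and leaves the details implicit; your write-up fills in exactly those details (Carath\'eodory extension, Schwarz reflection across analytic arcs, change of variables for arc-length, and differentiation of the integrated identity).
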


\begin{proof} This follows from conformal invariance of harmonic measure.
\end{proof}

\begin{notation} If $\Sigma\subset\Chat$ is an analytic Jordan curve such that $0$, $\infty$ lie in different components of $\Chat\setminus\Sigma$, we will denote by $\textrm{int}(\Sigma)$ (resp. $\textrm{ext}(\Sigma)$) the component of $\Chat\setminus\Sigma$ containing $0$ (resp. $\infty$).
\end{notation}

\noindent It will be useful to isolate the following special case of Proposition \ref{form_of_deriv}.

\begin{cor}\label{cor_form_of_deriv} Let $\Sigma\subset\Chat$ be an analytic Jordan curve such that $0$, $\infty$ lie in different components of $\Chat\setminus\Sigma$, and let $\phi_0$ (resp. $\phi_\infty$) be a conformal map of $\emph{int}(\Sigma)$ (resp. $\emph{ext}(\Sigma)$) onto $\mathbb{D}$ (resp. $\mathbb{D}^*$) fixing $0$ (resp. $\infty$). Then \begin{equation}\label{first1} d\omega(\cdot, 0, \emph{int}(\Sigma))(\zeta)=|\phi_0'(\zeta)|/2\pi,
\end{equation}
\begin{equation}\label{first2} d\omega(\cdot, \infty, \emph{ext}(\Sigma))(\zeta)=|\phi_\infty'(\zeta)|/2\pi,
\end{equation}
for all $\zeta\in\Sigma$. 
\end{cor}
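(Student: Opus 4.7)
The plan is to invoke Proposition \ref{form_of_deriv} directly, taking advantage of the fact that harmonic measure at the center of $\mathbb{D}$ (respectively at $\infty$ in $\mathbb{D}^*$) is simply normalized arc length on $\mathbb{T}$.

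For \eqref{first1}, I would apply Proposition \ref{form_of_deriv} with $\Omega=\textrm{int}(\Sigma)$, $\Omega'=\mathbb{D}$, $\phi=\phi_0$, and $z=0$. Since $\phi_0$ fixes $0$, this yields
\[ d\omega(\cdot,0,\textrm{int}(\Sigma))(\zeta) \;=\; |\phi_0'(\zeta)|\cdot d\omega(\cdot,0,\mathbb{D})(\phi_0(\zeta)) \]
for every $\zeta\in\Sigma$ (all such $\zeta$ are smooth boundary points since $\Sigma$ is analytic, so the proposition applies). The Poisson integral formula at the origin, $u(0)=\frac{1}{2\pi}\int_0^{2\pi}u(e^{i\theta})\,d\theta$, identifies $d\omega(\cdot,0,\mathbb{D})$ as the constant density $1/(2\pi)$ with respect to arc length on $\mathbb{T}$. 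Substituting this into the displayed equation gives \eqref{first1}.

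For \eqref{first2}, I would repeat the argument with $\Omega=\textrm{ext}(\Sigma)$, $\Omega'=\mathbb{D}^*$, $\phi=\phi_\infty$, and $z=\infty$. Harmonic measure on $\mathbb{D}^*$ from $\infty$ is also $1/(2\pi)$ times arc length on $\mathbb{T}$, which is verified either directly from the mean value property applied in the appropriate coordinate, or by conjugating through the Möbius map $w\mapsto 1/w$ (which carries $\mathbb{D}^*$ to $\mathbb{D}$, sends $\infty$ to $0$, and preserves $\mathbb{T}$ together with its arc length). Proposition \ref{form_of_deriv} then immediately yields \eqref{first2}.

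There is essentially no real obstacle here; the corollary is a bookkeeping step that specializes Proposition \ref{form_of_deriv} to the normalized model domains $\mathbb{D}$ and $\mathbb{D}^*$ that will serve as reference domains later. The only minor point is verifying that $|\phi_\infty'(\zeta)|$ in \eqref{first2} is the usual complex derivative, but since $0$ and $\infty$ lie in different components of $\Chat\setminus\Sigma$ we have $\Sigma\subset\mathbb{C}\setminus\{0\}$, so every $\zeta\in\Sigma$ is a finite nonzero point and no boundary behavior at $\infty$ enters.
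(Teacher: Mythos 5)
Your proposal is correct and takes essentially the same approach as the paper: apply Proposition \ref{form_of_deriv} with $\Omega'=\mathbb{D}$ (resp.\ $\mathbb{D}^*$) and use that harmonic measure from $0$ (resp.\ $\infty$) on $\mathbb{T}$ has constant density $1/(2\pi)$ with respect to arc length. The paper phrases the normalization slightly loosely (``coincide with length measure''); your explicit $1/(2\pi)$ is the precise statement and matches the displayed formulas.
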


\begin{proof} This follows from Proposition \ref{form_of_deriv} together with the fact that the harmonic measures $\omega(\cdot, \mathbb{D}, 0)$, $\omega(\cdot, \mathbb{D}^*, \infty)$ on $\mathbb{T}$ both coincide with length measure on $\mathbb{T}$.
\end{proof}

\begin{thm}\label{first_half_of_thm} Let $G$ be an analytic $4$-valent graph. If $G$ is an analytic net, then there exists a marking $(\mathcal{Z}, X)$ of $G$ and $\kappa\in\textrm{Weld}_A$ so that for each white face $F$ we have $g_F(t)=|\kappa'(e^{2\pi it})|$ for all $t\in\emph{dom}(g_F)$. 
\end{thm}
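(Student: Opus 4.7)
The plan is to explicitly produce the marking and welding from the data $(r,\Gamma)$. First, by post-composing $r$ with a Möbius transformation (which leaves $G=r^{-1}(\Gamma)$ invariant after relabeling $\Gamma$), we may assume $0\in\text{int}(\Gamma)$, $\infty\in\text{ext}(\Gamma)$, and that the given $2$-coloring of $G$ is the one in which the white faces are precisely those mapped by $r$ onto $\text{int}(\Gamma)$. Fix any point $v_0\in\Gamma$ that is not a critical value of $r$, and set
\[\mathcal{Z}:=r^{-1}(\{0,\infty\}),\qquad X:=r^{-1}(\{v_0\}).\]
By Remark \ref{net_represents_map}, each face $F$ is mapped homeomorphically by $r$ onto its target component, so $F$ contains exactly one point of $\mathcal{Z}$ and $\partial F$ contains exactly one point of $X$; hence $(\mathcal{Z},X)$ is a valid marking. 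Let $\phi_0:\text{int}(\Gamma)\to\D$ and $\phi_\infty:\text{ext}(\Gamma)\to\D^*$ be the Riemann maps with $\phi_0(0)=0$, $\phi_\infty(\infty)=\infty$, and $\phi_0(v_0)=\phi_\infty(v_0)=1$, and define $\kappa:=\phi_\infty\circ\phi_0^{-1}$. Analyticity of $\Gamma$ plus Schwarz reflection puts $\kappa\in\mathcal{O}(\mathbb{T})=\text{Weld}_A$, as in (the easy direction of) Proposition \ref{confweldingequiv}.

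For each white face $F$, consider the conformal map $\Phi_F:=\phi_0\circ r|_F:F\to\D$, which satisfies $\Phi_F(z_F)=0$ and $\Phi_F(x_F)=1$. Since $\omega(\cdot,0,\D)$ is normalized Lebesgue measure on $\mathbb{T}$ and $\Phi_F$ is orientation-preserving, the parametrization $\gamma$ of Definition \ref{harmonic_parametrization} must satisfy $\Phi_F(\gamma(t))=e^{2\pi i t}$, equivalently
\[r(\gamma(t))=\phi_0^{-1}(e^{2\pi i t}).\]
For $t\in\text{dom}(g_F)$, the point $\gamma(t)$ lies on the interior of an edge, hence is not a critical point of $r$, and the neighboring face $F_t$ is black with $r(z_{F_t})=\infty$ and $r|_{F_t}:F_t\to\text{ext}(\Gamma)$ conformal. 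Applying Proposition \ref{form_of_deriv} to $r|_F$ and $r|_{F_t}$, and then Corollary \ref{cor_form_of_deriv} to express the pushed-forward harmonic measures on $\Gamma$ via $|\phi_0'|$ and $|\phi_\infty'|$, yields
\begin{align*}
d\omega(\cdot,z_F,F)(\gamma(t)) &= \frac{|r'(\gamma(t))|\cdot|\phi_0'(r(\gamma(t)))|}{2\pi},\\
d\omega(\cdot,z_{F_t},F_t)(\gamma(t)) &= \frac{|r'(\gamma(t))|\cdot|\phi_\infty'(r(\gamma(t)))|}{2\pi}.
\end{align*}
The common nonzero factor $|r'(\gamma(t))|/(2\pi)$ cancels in the ratio \eqref{goverboundF}, and the chain rule $\kappa'(w)=\phi_\infty'(\phi_0^{-1}(w))/\phi_0'(\phi_0^{-1}(w))$ evaluated at $w=e^{2\pi i t}$ gives
\[g_F(t)=\frac{|\phi_\infty'(\phi_0^{-1}(e^{2\pi i t}))|}{|\phi_0'(\phi_0^{-1}(e^{2\pi i t}))|}=|\kappa'(e^{2\pi i t})|,\]
as required.

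The computation itself is short, so the only real obstacle is bookkeeping of normalizations and orientations: the identity must hold pointwise in $t$, not merely modulo a rotation of $\mathbb{T}$. The choices $\phi_0(v_0)=\phi_\infty(v_0)=1$ and the common base point $v_0$ used to define $X=r^{-1}(v_0)$ are exactly what synchronize the starting points of the $\gamma$-parametrizations on different white faces with the angle $0$ on $\mathbb{T}$ where $\kappa$ is being evaluated. Once this alignment is fixed, the proof reduces to the one-line cancellation above.
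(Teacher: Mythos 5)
Your proposal is correct and follows essentially the same route as the paper: the same choice of $\mathcal{Z}=r^{-1}(\{0,\infty\})$, $X=r^{-1}(v_0)$, the same welding $\kappa=\phi_\infty\circ\phi_0^{-1}$ with the same normalizations synchronizing $x_F$ with $1\in\mathbb{T}$, and the same chain of Proposition~\ref{form_of_deriv}, Corollary~\ref{cor_form_of_deriv}, and the chain rule to obtain $g_F(t)=|\kappa'(e^{2\pi it})|$. The only differences are cosmetic: you make the cancellation of $|r'(\gamma(t))|$ explicit and you pause to verify that $(\mathcal{Z},X)$ is a valid marking, both of which the paper leaves implicit.
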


\begin{proof} Suppose $G$ is an analytic net, in other words there exists a rational map $r$ and an analytic Jordan curve $\Sigma$ passing through $\textrm{CV}(r)$ such that $r^{-1}(\Sigma)=G$. Take two points, one in each component of $\Chat\setminus\Sigma$: we may assume without loss of generality that these two points are $0$, $\infty$ and that black (resp. white) components are mapped to the component of $\Chat\setminus\Sigma$ containing $\infty$ (resp. $0$). Set $\mathcal{Z}:=r^{-1}(\{0, \infty\})$ and $X:=r^{-1}(y)$ where $y\in\Sigma\setminus\textrm{CV}(r)$. Let $\phi_0$, (resp. $\phi_\infty$) be a conformal mapping of $\textrm{int}(\Sigma)$, (resp. $\textrm{ext}(\Sigma)$) onto $\mathbb{D}$ (resp. $\mathbb{D}^*$) fixing $0$ (resp. $\infty$), so that $\kappa:=\phi_\infty\circ\phi_0^{-1}$ is a conformal welding. We normalize $\phi_0(y)=\phi_\infty(y)=1$. Fix a white face $F$. We will show that $g_F(t)=|\kappa'(e^{2\pi it})|$ for all $t\in\textrm{dom}(g_F)$.

Note that the map $r$ is injective on each face of $G$. Thus, we deduce: 
\begin{equation}\label{gexp} g(t):=\frac{d\omega(\cdot, z_{F_t}, F_t)(\gamma(t))}{d\omega(\cdot, z_F, F)(\gamma(t))}=\frac{d\omega(\cdot, \infty, \textrm{ext}(\Sigma))(r\circ\gamma(t))}{d\omega(\cdot, 0, \textrm{int}(\Sigma))(r\circ\gamma(t))}=\left|\frac{\phi_\infty'(r\circ\gamma(t))}{\phi_0'(r\circ\gamma(t))}\right|,
\end{equation} 
for all $t\in\textrm{dom}(g_F)$, where the first $=$ follows from Proposition \ref{form_of_deriv}, and the second $=$ follows from Corollary \ref{cor_form_of_deriv}.
By the chain rule we have that
\begin{equation}\label{frac2}\frac{\phi_\infty'(r\circ\gamma(t))}{\phi_0'(r\circ\gamma(t))}=\left( \phi_\infty\circ\phi_0^{-1} \right)'\left( \phi_0\circ r\circ \gamma(t) \right).
\end{equation}
Recall the normalizations $\phi_0\circ r(x_F)=1$ and $\phi_0\circ r(z_F)=0$, and recall also that $\gamma(t)$ was defined so that $\omega(\gamma([0, t]), z_F, F)=t$. Thus we deduce by conformal invariance of harmonic measure that $\phi_0(r\circ\gamma(t))=\exp(2\pi it)$. Together with (\ref{gexp}) and (\ref{frac2}), this implies that 
\begin{equation}\label{finalg} g(t)=|\left( \phi_\infty\circ\phi_0^{-1} \right)'\left(\exp(2\pi it)\right)|=:|\kappa'(e^{2\pi it})|
\end{equation}
for all $t\in\textrm{dom}(g_F)$.
\end{proof}

\begin{thm}\label{sec_half_of_thm} Let $G$ be an analytic $4$-valent graph. Suppose there exists a marking $(\mathcal{Z}, X)$ of $G$ and $\kappa\in\textrm{Weld}_A$ so that for each white face $F$ we have $g_F(t)=|\kappa'(e^{2\pi it})|$ for all $t\in\emph{dom}(g_F)$. Then $G$ is an analytic net.
\end{thm}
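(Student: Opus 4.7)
The plan is to construct an analytic Jordan curve $\Gamma$ and a rational map $r:\Chat\to\Chat$ such that $r^{-1}(\Gamma)=G$. Since $\kappa\in\mathcal{O}(\mathbb{T})=\textrm{Weld}_A$, the proof of Proposition~\ref{confweldingequiv} produces an analytic Jordan curve $\Gamma\subset\Chat$ together with conformal maps $\phi_0:\textrm{int}(\Gamma)\to\mathbb{D}$ and $\phi_\infty:\textrm{ext}(\Gamma)\to\mathbb{D}^*$ satisfying $\kappa=\phi_\infty\circ\phi_0^{-1}$; after a M\"obius change of coordinates I may assume $\phi_0(0)=0$, $\phi_\infty(\infty)=\infty$, and $\phi_0(y)=\phi_\infty(y)=1$ for a chosen point $y\in\Gamma$, so in particular $\kappa(1)=1$.

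I then define $r$ face-by-face: on each white face $F$, let $\psi_F:F\to\textrm{int}(\Gamma)$ be the unique conformal map with $\psi_F(z_F)=0$ and $\psi_F(x_F)=y$, and set $r|_F:=\psi_F$; on each black face $F'$, let $\psi_{F'}:F'\to\textrm{ext}(\Gamma)$ be the unique conformal map with $\psi_{F'}(z_{F'})=\infty$ and $\psi_{F'}(x_{F'})=y$, and set $r|_{F'}:=\psi_{F'}$. Since $G$ is analytic with $\pi/2$ interior angles at each vertex, each $\psi_F$ and $\psi_{F'}$ extends continuously (indeed, by Schwarz reflection, analytically across each edge) to the closed face, so $r$ has well-defined one-sided boundary values along every edge. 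Once these one-sided values are shown to agree along every edge, $r$ will be continuous on $\Chat$ and holomorphic off the measure-zero set $G$, hence by standard removability holomorphic on all of $\Chat$, hence rational; and $r^{-1}(\Gamma)=G$ by construction.

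To verify agreement on an edge $e$ between white $F$ and black $F'$, set $h_F:=\phi_0\circ\psi_F$ and $h_{F'}:=\phi_\infty\circ\psi_{F'}$, so that the identity $\psi_F|_e=\psi_{F'}|_e$ is equivalent to $h_{F'}|_e=\kappa\circ h_F|_e$. Using the harmonic parametrization $\gamma$ of $\partial F$ from the definition of $g_F$, conformal invariance of harmonic measure and the normalization $h_F(x_F)=1$ give $h_F(\gamma(t))=e^{2\pi it}$, so $\kappa\circ h_F(\gamma(t))=\kappa(e^{2\pi it})$. Reversing the calculation of Theorem~\ref{first_half_of_thm} via Proposition~\ref{form_of_deriv}, Corollary~\ref{cor_form_of_deriv}, and the chain rule, the hypothesis $g_F(t)=|\kappa'(e^{2\pi it})|$ forces the derivative identity $|h_{F'}'(\gamma(t))|=|(\kappa\circ h_F)'(\gamma(t))|$ on every edge sub-interval; since $\kappa$ is orientation-preserving on $\mathbb{T}$ and both $h_{F'}\circ\gamma$ and $\kappa\circ h_F\circ\gamma$ trace analytic arcs of $\mathbb{T}$ in the same direction, on each edge
\[ h_{F'}|_e \;=\; e^{2\pi i c_e}\cdot(\kappa\circ h_F)|_e \qquad \text{for some constant } c_e\in\mathbb{R}/\mathbb{Z}. \]

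The main obstacle is showing $c_e=0$ on every edge $e$. The crucial combinatorial input is that $|X\cap\partial F|=1$ for each face $F$: by double-counting $|X|=(\#\,\textrm{faces})/2$, and if $p\in X$ lies on edge $e$ between $F$ and $F'$, then $p$ already accounts for the unique marking point of each of $\partial F,\partial F'$, so necessarily $x_F=x_{F'}=p$. On any such marked edge $e^*$, the normalizations $h_F(p)=h_{F'}(p)=1$ together with $\kappa(1)=1$ immediately yield $c_{e^*}=0$. For unmarked edges, I propagate $c=0$ around each black face $F'$: the single-valuedness of $h_{F'}$ on $\partial F'$ gives, at each vertex $v$ where two edges $e_j',e_{j+1}'$ of $F'$ meet, the consistency equation
\[ e^{2\pi ic_{e_j'}}\,\kappa(h_{F_j}(v))\;=\;e^{2\pi ic_{e_{j+1}'}}\,\kappa(h_{F_{j+1}}(v)), \]
where $F_j,F_{j+1}$ are the two white faces meeting at $v$; the hypothesis $g_F\equiv|\kappa'|$ applied to both $F_j$ and $F_{j+1}$ (which forces compatible harmonic-measure placement of $v$ in each white face through $\kappa$) pins down $h_{F_j}(v)=h_{F_{j+1}}(v)$, so the vertex relation collapses to $c_{e_j'}=c_{e_{j+1}'}$. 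Starting from the marked edge of $F'$ and propagating around $\partial F'$ then forces $c_e=0$ on every edge of $F'$, and hence on every edge of $G$. This vertex-matching step is the main technical difficulty; granted it, $r$ is continuous and the proof is complete.
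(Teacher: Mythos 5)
Your proposal follows the same overall strategy as the paper: obtain $\Gamma$, $\phi_0$, $\phi_\infty$ from $\kappa$ via Proposition~\ref{confweldingequiv}, define $r$ face-by-face with the $(\mathcal{Z},X)$-normalization, and then show $r$ extends continuously across $G$. The paper carries out the last step by integrating: it defines $s_t=\omega(r_i\circ\gamma([0,t]),\infty,\mathrm{ext}(\Sigma))$, uses the hypothesis and Proposition~\ref{form_of_deriv} to show $s_t$ equals the corresponding harmonic-measure integral computed from the black side, and then concludes $r_i\circ\gamma=r_o\circ\gamma$ from agreement at $\gamma(0)$. You instead differentiate: you extract the per-edge rotation constants $c_e$ from the derivative identity and try to kill them edge-by-edge. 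The two are equivalent bookkeepings for the same argument, and your observation that the marking forces $x_F=x_{F'}$ on the marked edge (hence $c_{e^*}=0$ there) is correct and is exactly the role of the normalization $r(x_F)=\phi_0^{-1}(1)$, $r(x_{F'})=\phi_\infty^{-1}(1)$ in the paper.

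The step you flag as ``the main technical difficulty'' is, however, not actually resolved in your write-up. Your propagation around a black face $F'$ requires knowing $h_{F_j}(v)=h_{F_{j+1}}(v)$ at each vertex $v$ of $\partial F'$, and you assert that the hypothesis $g_F\equiv|\kappa'(e^{2\pi i\cdot})|$ ``pins down'' this equality, but you give no argument. The hypothesis controls $g_F$ on $\mathrm{dom}(g_F)$, which \emph{excludes} the vertex times, and the information it transmits at a vertex is about the rates at which harmonic-measure densities vanish there (the corner behavior of the conformal maps), not directly about the harmonic-measure location $h_F(v)\in\mathbb{T}$ of the vertex. Checking the obvious vertex-limit relations (four ratios of corner constants around $v$) gives compatibility conditions among those constants but does not, on its face, force $h_{F_j}(v)=h_{F_{j+1}}(v)$. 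So as written the proof has a genuine hole precisely at the step you singled out. For what it is worth, the paper's proof glosses over the same point: its final deduction from $\omega(r_i\circ\gamma([0,t]),\infty,\mathrm{ext}(\Sigma))=\omega(r_o\circ\gamma([0,t]),\infty,\mathrm{ext}(\Sigma))$ to $r_i(\gamma(t))=r_o(\gamma(t))$ tacitly treats $r_o\circ\gamma$ as a continuous (indeed injective) parametrization of an arc starting at $y$, which is exactly the statement that the black-face boundary values agree at each vertex of $\partial F$; this is the same vertex-matching assertion in different clothing. You have correctly located the crux but not closed it, so the proposal cannot yet be regarded as a complete proof.
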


\begin{proof} Let $G$, $\mathcal{Z}$, $X$, $\kappa$ be as in the statement, and let $\Sigma$ be the analytic Jordan curve giving rise to the conformal welding $\kappa:=\phi_\infty\circ\phi_0^{-1}$ where $\phi_\infty$ (resp. $\phi_0$) is a conformal mapping of $\textrm{ext}(\Sigma)$ (resp. $\textrm{int}(\Sigma)$) onto $\mathbb{D}^*$ (resp. $\mathbb{D}$). We may assume without loss of generality that $0$, $\infty$ lie in different components of $\Chat\setminus\Sigma$, and $\phi_0(0)=0$, $\phi_\infty(\infty)=\infty$, $\phi_\infty\circ\phi_0^{-1}(1)=1$. Define a mapping $r: \Chat\setminus G\rightarrow \Chat\setminus\Sigma$ as follows. In each white (resp. black) face $F$, set $r$ to be the conformal mapping from $F$ to $\textrm{int}(\Sigma)$ (resp. $\textrm{ext}(\Sigma)$), normalized so that $r(z_F)=0$ (resp. $r(z_F)=\infty$) and $r(x_F)=\phi_0^{-1}(1)$ (resp. $r(x_F)=\phi_\infty^{-1}(1)$). By removability of analytic arcs for holomorphic mappings, the proof will be finished once we show that $r$ extends continuously across $G$. 

Let $F$ be a white face of $G$, and consider the parametrization $\gamma: [0,1] \rightarrow \partial F$ as in Definition \ref{harmonic_parametrization}. Denote by $r_i: \partial F \rightarrow \Sigma$ the boundary value extension of $r|_F$, and let $r_o: \partial F\setminus V(G) \rightarrow \Sigma$ denote the boundary value extension of $r$ restricted to the union of the black faces of $G$. Fix $t\in[0,1]$, and let 
\begin{equation}\label{defnofst} s_t:=\omega(r_i\circ\gamma([0, t]), \infty, \textrm{ext}(\Sigma)).
\end{equation}
 Then, since $r_i\circ\gamma|_{[0,t]}$ parametrizes $r_i\circ\gamma([0, t])$, we have
\begin{equation} s_t=\int_0^t d\omega(\cdot, \infty, \textrm{ext}(\Sigma))(r_i\circ\gamma(x))\cdot|(r_i\circ\gamma)'(x)|dx.
 \end{equation}
 Multiplying by $1$ gives us
 \begin{equation}\label{s_t2} s_t=\int_0^t d\omega(\cdot, 0, \textrm{int}(\Sigma))(r_i\circ\gamma(x))\frac{d\omega(\cdot, \infty, \textrm{ext}(\Sigma))(r_i\circ\gamma(x))}{d\omega(, 0, \textrm{int}(\Sigma))(r_i\circ\gamma(x))}\cdot|(r_i\circ\gamma)'(x)|dx.
 \end{equation}

Corollary \ref{cor_form_of_deriv} and the chain rule imply that
\begin{equation}\label{eqtn1} \frac{d\omega(\cdot, \infty, \textrm{ext}(\Sigma))(r_i\circ\gamma(x))}{d\omega(, 0, \textrm{int}(\Sigma))(r_i\circ\gamma(x))}=\left|\frac{\phi_\infty'(r_i\circ\gamma(x))}{\phi_0'(r_i\circ\gamma(x))}\right|=\left|\left( \phi_\infty\circ\phi_0^{-1} \right)'\left( \phi_0\circ r_i\circ \gamma(t) \right)\right|.
\end{equation}
The normalization $\phi_0\circ r_i(x_F)=1$ together with conformal invariance of harmonic measure implies that 
\begin{equation}\label{eqtn2} \phi_0\circ r_i\circ \gamma(t)=\exp(2\pi i t) \textrm{ for all } t\in[0,1], \end{equation}
and so (\ref{eqtn1}) and (\ref{eqtn2}) together imply
\begin{equation} \frac{d\omega(\cdot, \infty, \textrm{ext}(\Sigma))(r_i\circ\gamma(x))}{d\omega(, 0, \textrm{int}(\Sigma))(r_i\circ\gamma(x))}= |\kappa'(e^{2\pi it})| \textrm{ for all } t\in[0,1]. 
\end{equation}
Thus the assumption  $g_F(t)=|\kappa'(e^{2\pi it})|$ for all $t\in\textrm{dom}(g_F)$ together with (\ref{s_t2}) implies that
  \begin{equation}\label{anotherst} s_t=\int_0^t d\omega(\cdot, 0, \textrm{int}(\Sigma))(r_i\circ\gamma(x))\frac{d\omega(\cdot, z_{F_x}, F_x)(\gamma(x))}{d\omega(\cdot, z_F, F)(\gamma(x))}\cdot|(r_i\circ\gamma)'(x)|dx
 \end{equation}

Next, we observe that 
 \begin{equation}\label{domega} d\omega(\cdot, 0, \textrm{int}(\Sigma))(r_i\circ\gamma(x))|r_i'(\gamma(x))|=d\omega(\cdot, z_F, F)(\gamma(x))
 \end{equation}
 for $x\in[0,1]\setminus\{x : \gamma(x)\in V(G)\}$ by Proposition \ref{form_of_deriv}, and so (\ref{anotherst}) and (\ref{domega}) together with the chain rule imply that
 \begin{equation}\label{anotherst1} s_t=\int_0^t d\omega(\cdot, z_{F_x}, F_x)(\gamma(x))\cdot|\gamma'(x)|dx. 
 \end{equation}
 Next, Proposition \ref{form_of_deriv} and (\ref{anotherst1}) imply that
 \begin{equation} s_t=\int_0^t d\omega(\cdot, \infty, \textrm{ext}(\Sigma))(r_0\circ\gamma(x))\cdot|(r_0\circ\gamma)'(x)|dx,
 \end{equation} 
 which together with the definition (\ref{defnofst}) of $s_t$ implies that
   \begin{equation} \omega(r_i\circ\gamma([0, t]), \infty, \textrm{ext}(\Sigma))= \omega(r_o\circ\gamma([0, t]), \infty, \textrm{ext}(\Sigma)). 
 \end{equation}
 Since we normalized so that $r_i\circ\gamma(0)=r_o\circ\gamma(0)$, we conclude that $r_i(\gamma(t))=r_o(\gamma(t))$. Since $t\in[0,1]$ was arbitrary, we conclude that $r$ extends continuously across $\partial F$, and since $F$ was an arbitrary white face, we conclude that $r$ extends continuously across $G$. 
\end{proof}


\section{An Interpolation between $z\mapsto z^m$ and $z\mapsto z^m+\delta z$}\label{interpolation_section}

Having proven Theorem \ref{char_thm}, we now turn to the proof of Theorem \ref{density_thm}. In this Section we focus on a technical result we will need on the existence of an efficient interpolation between $z\mapsto z^m$ on $|z|=1$ and $z\mapsto z^m+\delta z$ on $r\mathbb{D}$, where $r<1$, $m\in\mathbb{N}$ and $\delta>0$.

\begin{definition} Consider the smooth bump function:
\[ b(x):=\begin{cases} 
      \exp(1+\frac{1}{x^2-1}) & \textrm{ if } 0\leq x < 1 \\
      0 & \textrm{ if } x\geq 1.
   \end{cases}
\]
We use the transformation $\phi_r(x):=\frac{x-r}{1-r}$ in order to define the modified smooth bump function: 
\[ \hat{\eta}_r(x):=\begin{cases} 
      1 & \textrm{ if }x\leq r \\
      b(\phi_r(x)) & \textrm{ if } r \leq x \leq 1 \\
      0 & \textrm{ if }x\geq 1,
   \end{cases}
\]
and we define $\eta_r(z):=\hat{\eta}_r\left(|z|\right)$. We set $\iota_{m, \delta}(z):= z^m + \delta z \cdot\eta_{r_{m, \delta}}(z)$ for $z\in\mathbb{D}$ with $r_{m, \delta}:= 1-(4\delta)/m$. 
\end{definition}


\noindent We refer to Section 3 of \cite{MR4041106} for a proof of the following lemma.

\begin{lem}\label{interpolation} There exist $n\in\mathbb{N}$, $\delta>0$, and  $k<1$ such that if $m>n$ and $\varepsilon\leq\delta$, then 
\begin{equation} r_{m, \varepsilon}>(\varepsilon/m)^{1/(m-1)} \textrm{ and } \left|\left|\frac{(\iota_{m, \varepsilon})_{\overline{z}}}{(\iota_{m, \varepsilon})_z}\right|\right|_{L^\infty(\mathbb{D})}<k.
\end{equation} 
\end{lem}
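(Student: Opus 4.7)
The plan is to verify the two assertions separately.

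\textbf{Part (1).} Taking logarithms, the desired inequality becomes $\log(1 - 4\varepsilon/m) > (\log\varepsilon - \log m)/(m-1)$. Using $\log(1-x) \geq -x - x^2$ on $x \in [0, 1/2]$, the left-hand side is bounded below by $-4\varepsilon/m - 16\varepsilon^2/m^2$; for $\varepsilon \leq 1$, the right-hand side is at most $-\log(m)/(m-1)$. So it suffices to verify $\log(m)/(m-1) > 4\varepsilon/m + 16\varepsilon^2/m^2$, which holds for $\varepsilon \leq \delta$ bounded and $m$ larger than some threshold $n$, since the left side is of order $\log(m)/m$ while the right side is $O(1/m)$.

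\textbf{Part (2), setup.} Differentiating $\iota_{m,\varepsilon}(z) = z^m + \varepsilon z\,\hat{\eta}_r(|z|)$ with $r = r_{m,\varepsilon}$ and using $\partial_z|z| = \bar z/(2|z|)$, $\partial_{\bar z}|z| = z/(2|z|)$, I obtain
\begin{align*}
(\iota_{m,\varepsilon})_z &= m z^{m-1} + \varepsilon \hat{\eta}_r(|z|) + \tfrac{\varepsilon}{2}\hat{\eta}_r'(|z|)\,|z|, \\
(\iota_{m,\varepsilon})_{\bar z} &= \tfrac{\varepsilon}{2}\hat{\eta}_r'(|z|)\,z^2/|z|.
\end{align*}
Since $\hat{\eta}_r' \equiv 0$ off the annular transition region $A := \{r \leq |z| \leq 1\}$, the dilatation vanishes there, and all further analysis reduces to $A$.

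\textbf{Part (2), main bound.} On $A$, the chain rule together with $1-r = 4\varepsilon/m$ gives $|\hat{\eta}_r'(|z|)| \leq \|b'\|_\infty \cdot m/(4\varepsilon)$; the $\varepsilon$ prefactor thus combines with $m/(4\varepsilon)$ to produce
\begin{equation*}
|(\iota_{m,\varepsilon})_{\bar z}| \leq \tfrac{m}{8}\|b'\|_\infty \quad \text{and} \quad \left|\tfrac{\varepsilon}{2}\hat{\eta}_r'(|z|)\,|z|\right| \leq \tfrac{m}{8}\|b'\|_\infty,
\end{equation*}
both of order $m$ rather than $\varepsilon$. The leading term of $(\iota_{m,\varepsilon})_z$ has modulus $m|z|^{m-1} \geq m r^{m-1} = m(1-4\varepsilon/m)^{m-1}$, which approaches $m e^{-4\varepsilon}$ as $m \to \infty$. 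By the triangle inequality,
\begin{equation*}
\frac{|(\iota_{m,\varepsilon})_{\bar z}|}{|(\iota_{m,\varepsilon})_z|} \;\leq\; \frac{m\|b'\|_\infty/8}{m|z|^{m-1} - m\|b'\|_\infty/8 - \varepsilon}.
\end{equation*}
Because $b$ is explicit, $\|b'\|_\infty$ is a fixed computable constant; I would choose $\delta > 0$ small enough that $\|b'\|_\infty < 4 e^{-4\delta}$, and then $n$ large enough that $r^{m-1} \geq e^{-4\varepsilon} - o(1)$, which yields a uniform bound $k < 1$ valid for all $\varepsilon \leq \delta$ and $m > n$.

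\textbf{Main obstacle.} The delicate feature is that $(\iota_{m,\varepsilon})_{\bar z}$ and the potentially offsetting real term in $(\iota_{m,\varepsilon})_z$ are both of magnitude $\sim m$ (the small factor $\varepsilon$ is cancelled by the large $m/(4\varepsilon)$ from differentiating $\hat{\eta}_r$), so $|\mu| < 1$ is not automatic and depends sensitively on the numerical size of $\|b'\|_\infty$ together with the scaling $r_{m,\varepsilon} = 1 - 4\varepsilon/m$, chosen precisely so that $r^{m-1}$ stays bounded away from $0$ as $m \to \infty$. Part (1) then plays a complementary role: it guarantees that the critical points $\{|z| = (\varepsilon/m)^{1/(m-1)}\}$ of $z \mapsto z^m + \varepsilon z$ lie inside the disk $|z| < r_{m,\varepsilon}$ where $\iota_{m,\varepsilon}$ genuinely equals $z^m + \varepsilon z$, so the interpolation preserves the critical set and introduces no new critical points.
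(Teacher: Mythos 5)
The paper does not contain its own proof of this lemma (it only cites Section~3 of the reference by Fagella--Jarque--Lazebnik), so there is nothing in the present text to compare against directly. Your direct computation is the natural approach and is essentially sound: the formulas for $(\iota_{m,\varepsilon})_z$ and $(\iota_{m,\varepsilon})_{\bar z}$ are correct, the observation that everything localizes to the annulus $\{r\le |z|\le 1\}$ is right, and the scaling analysis showing that the numerator and the bad term in the denominator are both of order $m\|b'\|_\infty/8$ while $m|z|^{m-1}\ge mr^{m-1}\approx m e^{-4\varepsilon}$ is the key point. Part~(1) is also fine, though note your two estimates give $A>B$ with $\mathrm{LHS}\ge A$ and $\mathrm{RHS}\le B$, so the strict inequality $\mathrm{LHS}>\mathrm{RHS}$ follows as desired.

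However, there is one genuine gap: your entire Part~(2) hinges on $\|b'\|_\infty < 4$ (so that $\|b'\|_\infty < 4e^{-4\delta}$ is achievable for some $\delta>0$), and you flag this as ``a fixed computable constant'' but do not compute it. This is not a cosmetic omission: if $\|b'\|_\infty$ were $\ge 4$, your argument would collapse, and the coefficient $4$ in the definition $r_{m,\delta} = 1 - 4\delta/m$ is evidently calibrated against the actual size of $\|b'\|_\infty$. The check is short and should be carried out. Writing $s = 1-x^2\in(0,1]$, one finds $|b'(x)| = 2\sqrt{1-s}\,s^{-2}e^{1-1/s}$, whose logarithmic derivative vanishes when $3s^2 - 6s + 2 = 0$, i.e.\ $s^* = 1 - 1/\sqrt{3}\approx 0.423$; plugging back gives $\|b'\|_\infty \approx 2.17$, comfortably below $4$. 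With this inserted, your argument is complete: one may take $\delta$ with $4e^{-4\delta} > 2.17$ (any $\delta < 0.15$ works), then take $n$ large enough that the error terms ($\varepsilon/m$, the gap between $r^{m-1}$ and $e^{-4\varepsilon}$, and the needs of Part~(1)) are negligible, yielding a uniform $k<1$.
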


\begin{notation} We will use the notation $\iota_m:=\iota_{m, \delta}$ for $m$ and $\delta$ as in the conclusion of Lemma \ref{interpolation}, and we will sometimes omit the subscript $m$ and simply write $\iota$. 
\end{notation}


\noindent It will be important to record the critical points and values of $\iota$:
\begin{equation}\label{cpcviota} \textrm{CP}(\iota_m)=\left(\frac{-\delta}{m}\right)^{\frac{1}{m-1}}, \textrm{ and } \textrm{CV}(\iota_m)=\delta\left(\frac{-\delta}{m}\right)^{\frac{1}{m-1}}\left(\frac{m-1}{m}\right).
\end{equation}

\begin{rem} We extend $\iota$ to a holomorphic self-map of $\Chat$ by Schwarz-reflection: $\iota(z):=\overline{1/\iota(1/\overline{z})}$ for $z\in \mathbb{D}^*$. Thus, statements about $\iota|_{\mathbb{D}}$ easily translate to statements about $\iota|_{\mathbb{D}^*}$, for instance the critical points and values of $\iota|_{\mathbb{D}^*}$ are obtained by inverting the formulas in (\ref{cpcviota}). 
\end{rem}

\begin{notation} We will use the notation $A(r_1, r_2):=\{z\in\mathbb{C} : r_1 < |z| < r_2\}$.
\end{notation}

\begin{definition}\label{defnofcn} We set
\begin{equation} c_m:=\left(\frac{\delta}{m}\right)^{\frac{2}{m-1}}
\end{equation}
\end{definition}

\begin{lem}\label{annulus_pullback_lemma} The sequence $c_m\rightarrow1$ as $m\rightarrow\infty$, and if $|z|<c_m$, then
\begin{equation}\label{iota_bound} |\iota(z)| \leq c_m^m+\delta c_m < |\emph{CV}(\iota)|. \end{equation}
In particular, the annulus $A(c_m^m+\delta c_m, 1)$ contains the critical values of $\iota$, and for any $\varepsilon>0$ we have $\iota^{-1}(A(c_m^m+\delta c_m,1))\subset A(1-\varepsilon, 1)$ for all sufficiently large $m$. 
\end{lem}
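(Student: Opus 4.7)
The plan is to establish each assertion by direct calculation from the definitions, using the identity $c_m^m = (\delta^2/m^2)\, c_m$, which I obtain from writing $c_m = a_m^2$ with $a_m := (\delta/m)^{1/(m-1)} = |\textrm{CP}(\iota_m)|$ satisfying $a_m^{m-1} = \delta/m$. For the limit $c_m \to 1$, I take logarithms: $\log c_m = \frac{2}{m-1}\log(\delta/m) = O(\log m / m) \to 0$. For the bound $|\iota(z)| \leq c_m^m + \delta c_m$ when $|z| < c_m$, I would use the crude estimate $|\iota(z)| \leq |z|^m + \delta |z|$ (valid because $|\eta_{r_{m,\delta}}| \leq 1$) together with monotonicity of $t \mapsto t^m + \delta t$ on $[0,\infty)$. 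In fact one has $c_m < r_{m,\delta}$ for all sufficiently large $m$ (since $1 - c_m \sim 2\log(m/\delta)/(m-1)$ dominates $1 - r_{m,\delta} = 4\delta/m$), so on $|z| < c_m$ one has $\iota(z) = z^m + \delta z$ identically.

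The main technical step is the strict inequality $c_m^m + \delta c_m < |\textrm{CV}(\iota_m)|$. Using the identity above, the left side rewrites as $\delta c_m(1 + \delta/m^2)$, and by (\ref{cpcviota}) the right side equals $\delta a_m (m-1)/m$. After cancelling $\delta$, the inequality reduces to $a_m(1 + \delta/m^2) < 1 - 1/m$, i.e., $a_m < \frac{1-1/m}{1+\delta/m^2}$. From the expansion $a_m = 1 - \frac{\log(m/\delta)}{m-1} + O((\log m /m)^2)$ and $\frac{1-1/m}{1+\delta/m^2} = 1 - \frac{1}{m} - \frac{\delta}{m^2} + O(1/m^3)$, the inequality holds whenever $\frac{\log(m/\delta)}{m-1} > \frac{1}{m} + O(1/m^2)$, which is true for all sufficiently large $m$. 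This asymptotic comparison---showing that $1 - a_m$ (of order $\log m /m$) dominates $1/m$---is the subtle point of the argument and fixes the threshold on $m$ past which the lemma holds (enlarging the $n$ from Lemma \ref{interpolation} if necessary).

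The last two assertions follow quickly. Since $|\textrm{CV}(\iota_m)| = \delta a_m(m-1)/m < \delta < 1$, and the previous step shows $|\textrm{CV}(\iota_m)| > c_m^m + \delta c_m$, the critical values lie in $A(c_m^m + \delta c_m, 1)$. For the preimage statement, the contrapositive of (\ref{iota_bound}) gives $\iota^{-1}(A(c_m^m + \delta c_m, 1)) \cap \{|z| \leq c_m\} = \emptyset$; the target annulus lies in $\mathbb{D}$ and $\iota$ sends $\mathbb{D}^*$ into $\mathbb{D}^*$ via its Schwarz reflection extension, so the entire preimage lies in $\{c_m < |z| < 1\}$. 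For any $\varepsilon > 0$, the first assertion ($c_m \to 1$) yields $c_m > 1 - \varepsilon$ for $m$ sufficiently large, hence $\iota^{-1}(A(c_m^m + \delta c_m, 1)) \subset A(1-\varepsilon, 1)$. The only real obstacle is the asymptotic comparison carried out in the second paragraph.
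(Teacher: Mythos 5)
Your proposal is correct, and it actually handles the crux of the lemma more carefully than the paper's own argument. After dividing through by $\delta(\delta/m)^{1/(m-1)}$, the inequality $c_m^m+\delta c_m < |\mathrm{CV}(\iota)|$ is equivalent to
\[
(\delta/m)^{1/(m-1)}\bigl(\delta/m^2+1\bigr) < 1-1/m,
\]
but the paper records this as $\frac{\delta}{m}\bigl(\frac{\delta}{m^2}+1\bigr) < 1-\frac{1}{m}$ and concludes by noting the left side tends to $0$ and the right to $1$. That replacement of $(\delta/m)^{1/(m-1)}$ by $\delta/m$ is not an equivalence: in fact $(\delta/m)^{1/(m-1)}\to 1$, so both sides of the corrected inequality tend to $1$ and the naive limit comparison is inconclusive. (One can repair the paper's line of reasoning by raising both sides to the $(m-1)$-th power, whereupon the left side $(\delta/m)(\delta/m^2+1)^{m-1}\to 0$ while the right side $(1-1/m)^{m-1}\to e^{-1}$.) Your asymptotic expansion $a_m = (\delta/m)^{1/(m-1)} = 1 - \log(m/\delta)/(m-1) + O((\log m/m)^2)$ gets at this directly and more transparently: it shows the inequality reduces to $\log(m/\delta)/(m-1) > 1/m + O(1/m^2)$, i.e., that the gap $1-a_m$, of order $\log m/m$, dominates $1/m$. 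That is the genuinely subtle comparison, and you are right to flag it as the real obstacle. The remaining steps---the identity $c_m^m=(\delta^2/m^2)c_m$, the elementary logarithm argument for $c_m\to1$ (where the paper invokes L'H\^opital), the triangle-inequality bound, and the preimage containment via Schwarz reflection---match the paper's reasoning and are fine.
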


\begin{proof} The conclusion $c_m\rightarrow1$ as $m\rightarrow\infty$ follows from L'H\"opital's rule. The statement $|\iota(z)| \leq c_m^m+\delta c_m$ is simply the triangle inequality. We calculate that
\begin{equation} c_m^m+\delta c_m=\delta\left(\delta/m\right)^{\frac{2}{m-1}}\left(\frac{\delta}{m^2}+1\right),
\end{equation}
and so, by (\ref{cpcviota}), the inequality 
\begin{equation} c_m^m+\delta c_m < |\textrm{CV}(\iota)| \end{equation} 
is equivalent to 
\begin{equation} \frac{\delta}{m}\left(\frac{\delta}{m^2}+1\right) < 1-\frac{1}{m} \end{equation}
which is true for large $m$ since the left-hand side tends to $0$ and the right-hand side to $1$ as $m\rightarrow\infty$. 
\end{proof}

\section{Proof of Theorem \ref{density_thm}}\label{proof_of_density_section} 

Having collected the relevant facts about the map $\iota$ in the previous section, we now turn to the proof of Theorem \ref{density_thm}. 

\begin{notation} Throughout this section, we will fix (as in the statement of Theorem \ref{density_thm}) a topological net $G:=f^{-1}(\Sigma)\subset\Chat$. For the purposes of proving Theorem \ref{density_thm}, after replacing $G$ by a Hausdorff approximant, we may assume that the edges of $G$ are analytic, and at any vertex where $2n$ edges meet, the angles of intersection are all $\pi/n$. Choose a $2$-coloring of $G$, and for each white (resp. black) face $F$ of $G$, we let $\phi_F$ denote a conformal mapping of $F$ onto $\mathbb{D}$ (resp. $\mathbb{D}^*$).
\end{notation}


We will now define a sequence of graphs $(G_n)_{n=1}^\infty$ by subdividing the edges in $G$. The graphs $G_n$ and $G$ will coincide as embedded subsets of the plane, but the diameters of the edges of $G_n$ will $\rightarrow0$ as $n\rightarrow\infty$. 

\begin{definition}\label{Gndefn} For each $n\in\mathbb{N}$, we define a graph $G_n$ as follows. Let $F$ be a white face of $G$. Define
\begin{equation} \tilde{V}_n^F:=\phi_F^{-1}\{\zeta: \zeta^n=\pm1\}, \end{equation}
and color each point in $\tilde{V}_n^F$ black or white according to whether the point is a pullback of an $n^{\textrm{th}}$ root of $+1$ or $-1$, respectively. For each vertex $v\in V(G)$ satisfying $v\in\partial F$, let $I_v$ be the component of $\partial F\setminus \tilde{V}_n^F$ containing $v$, and denote by $x_w$ (resp. $x_b$) the white (resp. black) endpoint of $I_v$. Let $I_w$ denote the component of $\partial F\setminus \tilde{V}_n^F$ such that $\overline{I_w}\cap \overline{I_v}=\{x_w\}$. Define $V_n^F$ by removing $x_w$, $x_b$ from $\tilde{V}_n^F$, and adding $v$ (colored black) as well as the midpoint of $I_w$ (colored white). Doing so over all vertices $v\in V(G)\cap\partial F$ defines $V_n^F$, and we set $V_n$ to be the union of $V_n^F$ over all white faces $F$. We define $G_n$ to be the graph obtained by subdividing $G$ at the vertices $V_n$, in other words $V(G_n)=V_n$.
 \end{definition}


We will now explain how new edges and vertices can be added to each black face $F$ of the graph $G_n$ so that there exists a small adjustment of $\phi_F$ which maps the edges of this modified black face (with added edges and vertices) to the $m^{\textrm{th}}$ roots of $\pm1$ for some $m\gg n$ (see Figure \ref{folding_picture}). First we will need to introduce several definitions.

\begin{definition}\label{length_mult_defn} Suppose $e$, $f$ are rectifiable Jordan arcs, and $h: e \rightarrow f$ is a homeomorphism. We say that $h$ is \emph{length-multiplying} on $e$ if  the push-forward (under $h$) of arc-length measure on $e$ coincides with the arc-length measure on $f$ multiplied by $\textrm{length}(f)/\textrm{length}(e)$.
\end{definition}

\begin{notation} For a graph $\mathcal{G}\subset\Chat$ and $C<\infty$, we let 
\begin{equation}\label{neighborhoodofgraph} N_C(\mathcal{G}):=\bigcup_{e} \{ z\in\Chat : \dist(z,e) < C\cdot\textrm{diam}(e) \},
\end{equation}
where the union in (\ref{neighborhoodofgraph}) is taken over all edges $e$ in $\mathcal{G}$. For $m\in\mathbb{N}$, we let \begin{equation} \nonumber \mathcal{Z}_m^{\pm}:=\{ z \in \mathbb{T} : z^m=\pm1 \} \textrm{ and }  \mathcal{Z}_m:=\mathcal{Z}_m^+\cup\mathcal{Z}_m^-. \end{equation}
\end{notation}

\begin{definition}\label{vertsuppdefn} Suppose $\Omega\subset\mathbb{C}$ is a domain such that $\partial\Omega\subset\mathcal{G}$ for some graph $\mathcal{G}$, and let $C<\infty$. We say a quasiregular mapping $g: \Omega \rightarrow \Chat$ is $(C, \mathcal{G})$-\emph{supported} if $\textrm{supp}(g_{\overline{z}})\subset N_C(\mathcal{G})$. We say a domain $W\subset\Omega$ is a \emph{$(C, \mathcal{G})$-tree domain in $\Omega$} if $W\subset N_C(\mathcal{G})$ and $\partial W$ consists of $\mathcal{G}\cap\partial\Omega$ together with a collection of pairwise disjoint trees rooted at the vertices of $\mathcal{G}$. 
\end{definition}


\begin{thm}\label{complicated_folding_adjustment} There exist $K$, $C<\infty$ so that for every $n\in\mathbb{N}$ and every black face $F$ of $G$, there exists a $(C, G_n)$-tree domain $W=W(n, F)$ in $F$, an integer $m=m(n, F)$, and a $K$-quasiconformal mapping $\psi=\psi(n, F): \mathbb{D}^*\setminus\phi_F(\partial W) \rightarrow \mathbb{D}^*$ so that: 
\begin{enumerate}
 \item $\psi\circ\phi_F: W \rightarrow \mathbb{D}^*$ is $(C, \partial W)$-supported, and $\psi\circ\phi_F(z)=\phi_F(z)$ off of $\textrm{supp}((\phi_F\circ\psi)_{\overline{z}})$, 
 \item for any edge $e$ of $\partial W\cap G_n$, $\psi\circ\phi_F(e)$ is a component of $\mathbb{T}\setminus\mathcal{Z}_m$, and $\psi\circ\phi_F$ is length-multiplying on $e$, and
 \item for any edge $e$ of $\partial W\setminus G_n$ and $x\in e$, the two limits $\lim_{W\ni z\rightarrow x}(\psi\circ\phi_F)^m(z)$ are conjugate points on $\mathbb{T}$. 
 \end{enumerate}
\end{thm}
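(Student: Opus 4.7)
The proof is a quasiconformal folding construction in the style of \cite{Bis15} and \cite{MR4041106}. I would begin by analyzing the combinatorics of $\partial F$ in $G_n$. For a fixed black face $F$, enumerate the edges $e_1, \ldots, e_N$ of $G_n$ on $\partial F$ cyclically, and let $\ell_i$ denote the length of $\phi_F(e_i) \subset \mathbb{T}$. Because $V_n^{F'}$ for an adjacent white face $F'$ is defined as $\phi_{F'}^{-1}$ of (a modification of) $\mathcal{Z}_n$, the corresponding arcs in the $\phi_{F'}$-coordinate each have length of order $\pi/n$. The welding $\phi_F \circ \phi_{F'}^{-1}$ across the shared portion of $\partial F \cap \partial F'$ is bi-Lipschitz away from $V(G)$ and has controlled distortion near $V(G)$ (from analyticity of edges and the prescribed angle condition). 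This should yield a uniform bound $\max_i \ell_i / \min_i \ell_i \leq C_1$, with $C_1$ depending only on $G$.

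Next, I would choose $m = m(n,F)$ so that each $\ell_i$ is within a uniform factor of $\pi/m$, and assign each $e_i$ to a target component of $\mathbb{T} \setminus \mathcal{Z}_m$, respecting cyclic order and the alternating $2$-coloring inherited from $z \mapsto z^m$. The $2m - N$ unassigned components are paired into \emph{conjugate pairs}: two arcs $A, A'$ are paired when $A^m$ and $(A')^m$ are complex-conjugate subsets of $\mathbb{T}$. The pairings at each vertex $v \in V(G_n) \cap \partial F$ should be realizable by a finite planar tree $T_v \subset F$, rooted at $v$ and lying in a neighborhood of $v$ of diameter comparable to the adjacent edge lengths. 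Setting $W := F \setminus \bigcup_v T_v$ then gives a $(C, G_n)$-tree domain for uniform $C$. I would then define $\psi$ on $\mathbb{T}$ by sending each $\phi_F(e_i)$ to its assigned target via affine rescaling of arclength (giving the length-multiplying condition (2)) and sending the two sides of each slit $\phi_F(T_v^{\pm})$ to the paired target arcs (giving the conjugacy condition (3)). Away from $N_C(\phi_F(\partial W))$ set $\psi$ equal to the identity; quasiconformal interpolation across $N_C(\phi_F(\partial W)) \setminus \phi_F(\partial W)$ using the Ahlfors-Beurling extension of the boundary map, combined with the folding machinery of \cite{Bis15} and \cite{MR4041106} near the slits, gives a $K$-quasiconformal $\psi$ with $K$ controlled by the quasi-symmetry of the boundary map, which is uniform thanks to Step~1. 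Condition (1) follows because $\psi$ is the identity outside $N_C(\phi_F(\partial W))$.

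The main obstacle is the combinatorial choice in the second step: one must pair the unassigned arcs in a way that is simultaneously realizable by disjoint planar trees fitting in small neighborhoods of their roots, and that respects the alternating coloring of $\mathbb{T} \setminus \mathcal{Z}_m$ induced by $z \mapsto z^m$. This is precisely the situation handled by the folding machinery of \cite{Bis15} and \cite{MR4041106}, and I would expect to invoke it essentially as a black box. Some extra care is needed at vertices of $V(G)$, where more than two edges of $G_n$ meet at prescribed angles of the form $\pi/k$; but once the combinatorial choices are made, extending $\psi$ quasiconformally and verifying (1)--(3) should be routine given the uniform arc-ratio bound from the first step.
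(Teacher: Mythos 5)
Your proposal follows essentially the same quasiconformal folding strategy that the paper uses: establish uniform comparability of the arc lengths $|\phi_F(e_i)|$, choose $m$ accordingly, attach trees rooted at the $G_n$-vertices to ``fill in'' the missing $m^{\textrm{th}}$ roots of $\pm 1$, and build $\psi$ by a piecewise-affine (in logarithmic coordinates) boundary map together with a quasiconformal extension supported near $\partial W$, invoking the folding lemmas of \cite{Bis15} and \cite{MR4041106} for the combinatorial pairing and the uniform $K$. The paper itself does not give a detailed proof here but only a sketch in the remark following the theorem, deferring to \cite{Bis15} and \cite{Bishop-Lazebnik}; your outline matches that sketch.
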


\begin{figure}[ht!]
{\includegraphics[width=1\textwidth]{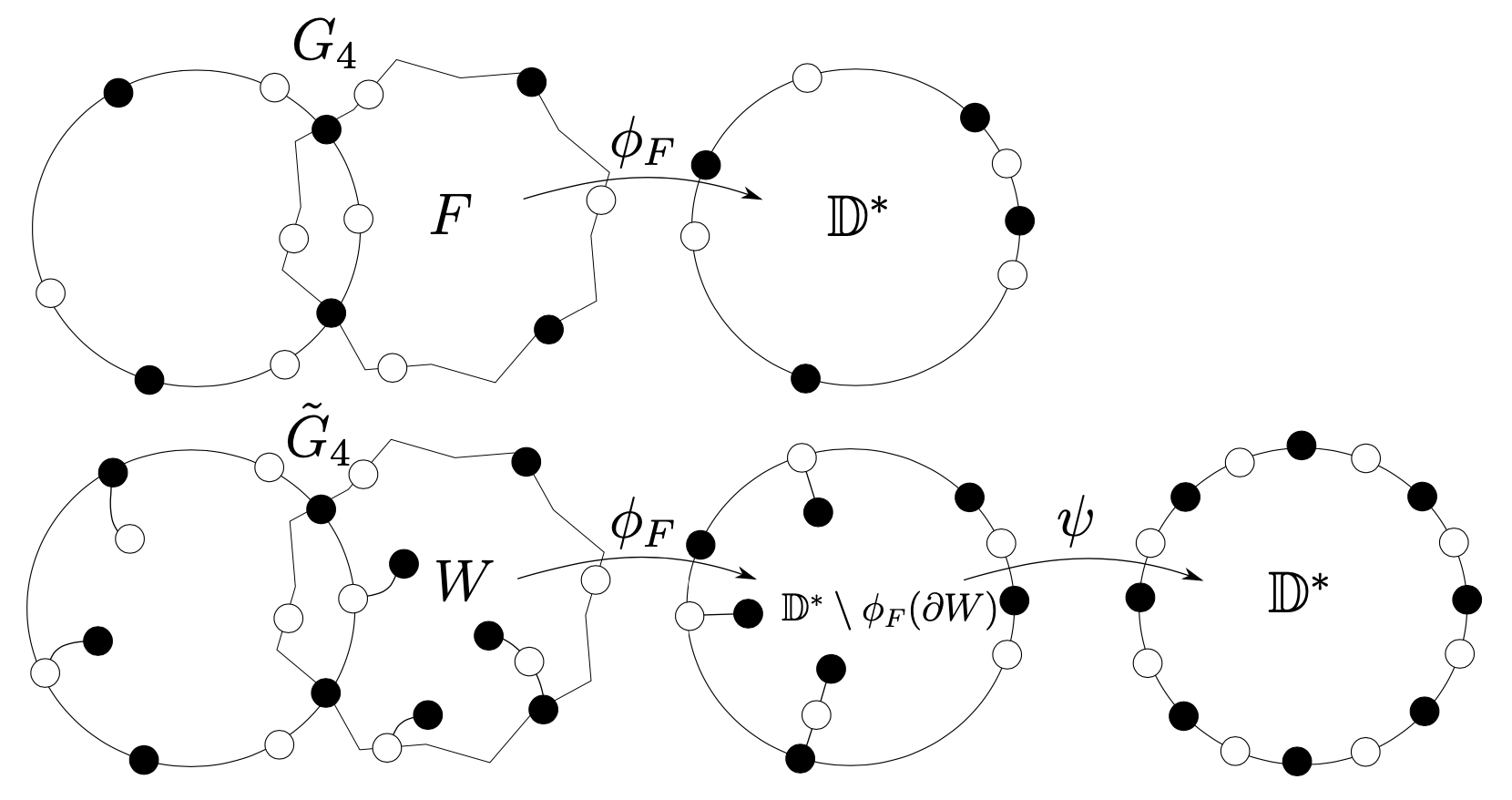}}
	\caption{Illustrated is Theorem \ref{complicated_folding_adjustment} and some of the notation in this section. Here $n=4$ and $m(4, F)=8$. The domain $W$ is obtained by removing the pictured trees from $F$. The trees are chosen such that the added vertices are mapped, by $\psi\circ\phi_F$, onto the $8^{\textrm{th}}$ roots of $\pm1$ as pictured. The map $\psi$ does not extend to a single-valued map on $\phi_F(\partial W\setminus \partial F)$, but for any $x\in \phi_F(\partial W\setminus \partial F)$, the two limits $\lim_{W\ni z\rightarrow x}\psi(z)$ are mapped to conjugate points on $\mathbb{T}$ by $z\mapsto z^8$..}
\label{folding_picture}
\end{figure}

\begin{rem} Theorem \ref{complicated_folding_adjustment} is essentially a summary of several technical lemmas in \cite{Bis15}; we will summarize the main idea in this Remark for the reader's convenience and refer to \cite{Bis15} or \cite{Bishop-Lazebnik} for the details (\cite{Bis15} contains the original proofs, and \cite{Bishop-Lazebnik} contains a simpler version of the argument which is sufficient for this paper). One may assume that $F=\mathbb{D}$, in which case $V_n\cap F$ consists of unevenly spaced points on $\mathbb{T}$. The main part of the proof of Theorem \ref{complicated_folding_adjustment} is to construct segments rooted at $V_n\cap F$ and a quasiconformal mapping which maps the union of $V_n\cap F$ with the vertices on the constructed segments onto the $m^{\textrm{th}}$ roots of $\pm1$. Neither the quasiconformal map $\psi$ nor the constructed segments are difficult to describe: the segments have as many vertices as are needed to ``fill in'' the missing   $m^{\textrm{th}}$ roots, and $\psi$ is piecewise-linear in logarithmic coordinates.  
\end{rem}

\begin{notation} We denote $\tilde{G}_n:=\bigcup_F \partial W(n, F)$, where the union is over all black faces $F$. In other words, $\tilde{G}_n$ is the graph $G_n$ together with the trees added in Theorem \ref{complicated_folding_adjustment} over all black faces. We let $F':=F\setminus\partial W$ for any black face $F$, in other words $F'$ is the black face $F$ after removing the trees added in Theorem \ref{complicated_folding_adjustment}. 
\end{notation}

\begin{lem}\label{adjustingphif} There exist $C$, $K<\infty$ so that for every $n\in\mathbb{N}$ and white face $F$, there is a $K$-quasiconformal, $(C, G_n)$-supported map $\tilde{\phi}_F: F \rightarrow \mathbb{D}$ so that
\begin{enumerate}
\item $\tilde{\phi}_F(V_n\cap\partial F)=\mathcal{Z}_n$, 
\item $\tilde{\phi}_F$ is length-multiplying on each edge of $G_n\cap\partial F$, and
\item $\tilde{\phi}_F(z)=\phi_F(z)$ off of $\textrm{supp}((\tilde{\phi}_F)_{\overline{z}})$. 
\end{enumerate}
\end{lem}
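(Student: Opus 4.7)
The plan is to construct $\tilde{\phi}_F$ as a quasiconformal modification of $\phi_F$ whose Beltrami coefficient is supported only within a thin boundary collar $\subset N_C(G_n)\cap F$. I would design the intended boundary values of $\tilde{\phi}_F$ first, then extend them into $F$ by a piecewise quasiconformal interpolation.

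For the boundary map, recall from Definition \ref{Gndefn} that $\phi_F(\tilde{V}_n^F)=\mathcal{Z}_n$ exactly, and that $V_n^F$ is obtained from $\tilde{V}_n^F$ by a purely local replacement near each $v\in V(G)\cap\partial F$: removing two neighbors $x_w,x_b$ of $v$ and inserting $v$ and the midpoint of the adjacent arc $I_w$. This gives a natural orientation- and color-preserving bijection $V_n^F\to\mathcal{Z}_n$, obtained by sending $v\mapsto\phi_F(x_b)$ and the midpoint of $I_w\mapsto\phi_F(x_w)$, and using $\phi_F$ elsewhere. On each edge $e$ of $G_n\cap\partial F$ I would define $\tilde{\phi}_F|_e$ to be the unique homeomorphism onto the corresponding arc of $\mathbb{T}\setminus\mathcal{Z}_n$ which is affine in arc length and realizes this pairing on endpoints.

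To extend $\tilde{\phi}_F$ into $F$, I would treat the regions far from $V(G)$ and near $V(G)$ separately. Away from $V(G)\cap\partial F$, $\partial F$ is an analytic arc and $\phi_F$ extends analytically by Schwarz reflection; combining this with Koebe distortion shows that $\tilde{\phi}_F|_e$ differs from $\phi_F|_e$ by a bounded quasisymmetric reparametrization, so a standard strip-interpolation in a $C\cdot\textrm{diam}(e)$-collar gives a $K$-quasiconformal extension matching $\phi_F$ on the inner boundary. Near a vertex $v\in V(G)\cap\partial F$ with opening angle $\pi/m_v$ in $F$, $\phi_F$ has the local normal form $\phi_F(z)=\phi_F(v)+c(z-v)^{m_v}+O((z-v)^{m_v+1})$; a scaling calculation shows that edges of $G_n$ adjacent to $v$ have diameter of order $n^{-1/m_v}$ while their $\phi_F$-images have length of order $1/n$, so $\phi_F(B(v,Cn^{-1/m_v})\cap F)$ is a region in $\mathbb{D}$ of diameter of order $C^{m_v}/n$ around $\phi_F(v)$. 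Within $B(v,Cn^{-1/m_v})\cap F$, define $\tilde{\phi}_F$ via a quasiconformal extension of its prescribed boundary values (taking $v$ to its target in $\mathcal{Z}_n$ and agreeing with $\phi_F$ on the outer arc $\partial B(v,Cn^{-1/m_v})\cap F$); since the displacement $|\tilde{\phi}_F(v)-\phi_F(v)|$ is of order $1/n$ and the image region has diameter $\gtrsim 1/n$ for $C$ sufficiently large, the extension is $K$-quasiconformal with $K$ depending only on $m_v$.

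Combining the pieces, $\tilde{\phi}_F$ is $K$-quasiconformal with $K$ depending only on the finite set $\{\pi/m_v:v\in V(G)\}$, it equals $\phi_F$ outside a union of strips and small disks each contained in $N_C(G_n)$ for some $C=C(G)$, and properties (1)--(3) follow by construction. The main obstacle I anticipate is the interpolation near the vertices of $V(G)\cap\partial F$: the power-function behavior of $\phi_F$ there forces $\tilde{\phi}_F$ to displace $v$ by order $1/n$ while the local $\phi_F$-scale in $F$ is $n^{-1/m_v}$, so care is required in calibrating the radius of the interpolation disk against the displacement in order to obtain a uniformly bounded quasiconformal constant.
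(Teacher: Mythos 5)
Your approach is correct in spirit but genuinely different in execution from the paper's. You prescribe the boundary values of $\tilde{\phi}_F$ first (affine in arc length on each edge of $G_n\cap\partial F$, with the natural vertex pairing) and then extend into $F$ by a decomposition into collar-strips far from $V(G)$ and half-disks near $V(G)$. The paper instead works entirely on the $\mathbb{D}$ side: it post-composes $\phi_F$ by two explicit self-homeomorphisms of $\mathbb{D}$ — first $\check{\phi}_F$ to realign $\phi_F(V_n)$ with $\mathcal{Z}_n$, then the radial stretch $\chi_{1/n,\phi(v)}$ at each $\phi(v)$, which ``opens'' the corner so that $|\Phi_F'|$ is bounded away from $0$ and $\infty$ on all of $\partial F$ uniformly in $n$ — and then performs a single linear interpolation in a logarithmic strip (via the periodic lift $\hat{\Phi}_F$ to the left half-plane). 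The corner flattening is what makes the paper's argument clean: once $|\hat{\Phi}_F'|$ is uniformly bounded on $i\mathbb{R}$, the dilatation of the strip interpolation against the piecewise-linear map $\xi$ is bounded by a soft lemma (Lemma~2.1 of \cite{Bishop-Lazebnik} or Theorem~A.1 of \cite{MR4008367}), with no case analysis at vertices.

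The place where your write-up has a genuine gap is the near-vertex step. Controlling the displacement $|\tilde{\phi}_F(v)-\phi_F(v)|\sim 1/n$ against the diameter $\sim C^{m_v}/n$ of the image region is not by itself enough to conclude that the half-disk extension is $K$-quasiconformal with $K$ independent of $n$: a quasiconformal extension with uniform constant requires the \emph{boundary correspondence} to be uniformly quasisymmetric, not just that a single point moves a bounded relative distance. What actually saves the argument is that $\tilde{\phi}_F\circ\phi_F^{-1}$, restricted to the arc of $\mathbb{T}$ near $\phi_F(v)$, is (up to bounded piecewise-linear error on the outer edges) an odd power reparametrization $s\mapsto \mathrm{const}\cdot|s|^{1/m_v}\mathrm{sgn}(s)$, which is quasisymmetric with a constant depending only on $m_v$; this is precisely the content that the paper's $\chi$-composition is designed to absorb in one stroke. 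If you make that quasisymmetry explicit (and check that adjacent collars of widths $C\cdot\mathrm{diam}(e)$, which differ by a bounded factor across neighboring edges, can be glued consistently), your route gives a complete proof; without it, the ``$K$ depending only on $m_v$'' claim is asserted rather than proved.
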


\begin{proof} Recall from Definition \ref{Gndefn} that there are $2\cdot|V(G)\cap F|$ many vertices which $\phi_F$ does not map onto an $n^{\textrm{th}}$ root of $\pm1$. We fix this by defining $\check{\phi}_F$ as a post-composition of $\phi_F$ with a self-homeomorphism of $\mathbb{D}$ which is the identity outside of a $1/n$-radius neighborhood of $\mathbb{T}$, and inside the $1/n$-radius neighborhood of $\mathbb{T}$ maps linearly the components of $\mathbb{T}\setminus \phi_F(V_n)$ onto the components of $\mathbb{T}\setminus \mathcal{Z}_n$. This can be done with quasiconformal constant independent of $n$. 

Next, consider the $2$-quasiconformal map
\begin{equation} 
\chi(z):= \begin{cases} 
z/|z|^{1/2} & \textrm{ if } |z|\leq 1, \\
z &\textrm{ if } |z|\geq1. \\	 
\end{cases} 
\end{equation}
Let $\chi_{r,\zeta}(z):=r\cdot\chi((z-\zeta)/r)+\zeta$ be a rescaled version of $\chi$. Let $D(z,r):=\{\zeta : |\zeta-z|<r\}$. Define
\begin{equation} 
\Phi_F:= \begin{cases} 
\chi_{1/n, \phi(v)} \circ \check{\phi}_F & \textrm{ in } \check{\phi}_F^{-1}(D(\check{\phi}_F(v), 1/n)) \textrm{ for any } v\in V(G) \\
\check{\phi}_F &\textrm{ otherwise.} \\	 
\end{cases} 
\end{equation}
The map $\Phi_F$ has the property that $|\Phi_F'|$ is bounded away from $0$ and $\infty$ for $\zeta\in\partial F$, with a bound independent of $n$. Let $\textrm{LHP}:=\{z : \textrm{Re}(z)<0\}$, and denote by
\begin{equation} \hat{\Phi}_F: \textrm{LHP} \rightarrow \textrm{LHP}
\end{equation}
a $2\pi i$-periodic lift of $\Phi_F$ under $2\pi i$-periodic universal covering maps of $\textrm{LHP}$ onto $F\setminus\{\phi_F^{-1}(0)\}$, $\mathbb{D}\setminus\{0\}$. Let $\xi: i\mathbb{R} \rightarrow i\mathbb{R}$ denote the piecewise-linear map which sends each component $I$ of $i\mathbb{R}\setminus\hat{\Phi}_F^{-1}(\{ \pi i/n : n \in\mathbb{Z}\})$ linearly onto $\hat{\Phi}_F(I)$, and let $c$ denote the maximum diameter of a component of  $i\mathbb{R}\setminus\hat{\Phi}_F^{-1}(\{ \pi i/n : n \in\mathbb{Z}\})$. Since $|\hat{\Phi}_F'|$ is bounded away from $0$ and $\infty$ on $i\mathbb{R}$, the linear interpolation between $\hat{\Phi}_F$ on $-c+i\mathbb{R}$ and $\xi$ on $i\mathbb{R}$ has dilatation bounded independently of $n$ (see, for instance, Lemma 2.1 of \cite{Bishop-Lazebnik}, or Theorem A.1 of \cite{MR4008367}). Define $\tilde{\Phi}_F$ to be $\hat{\Phi}_F$ for $\textrm{Re}(z)<-c$, and to be the aforementioned linear interpolation in $-c\leq\textrm{Re}(z)\leq0$. The projection (under the covering maps) of the map $\tilde{\Phi}_F$ back to a map $F \mapsto \mathbb{D}$ satisfies the conclusions of the Lemma. 
\end{proof}

\begin{definition}\label{h_n_defn} Recalling the map $\iota$ of Section \ref{interpolation_section}, we define, for every $n\in\mathbb{N}$, a function $h_n: \Chat\setminus \tilde{G}_n \rightarrow \Chat\setminus\mathbb{T}$ as follows: 
\begin{equation}\label{g_n_defn} h_n(z):= \begin{cases} 
\iota_n\circ\tilde{\phi}_F & \textrm{ in every white face } F \\
\iota_{m(n, F)} \circ \psi \circ \phi_F &\textrm{ in } F' \textrm{ for every black face } F \\	 
  \end{cases} 
\end{equation}
\end{definition}

\noindent The map $h_n$ does not extend to a single-valued function across the edges of $\tilde{G}_n\setminus G$. We will fix this using the following map.

\begin{definition}\label{sigma_definition} Let $\mu(z):=(z+1)/(z-1)$ be the conformal mapping of $\{z : |z|>1\}$ 
onto the right-half plane, taking the triple $(-1,1,\infty)$
	to $(0, \infty, 1)$. Note that $\mu^{-1} = \mu$.
Consider the $3$-quasiconformal map $\nu : \{\textrm{Re}(z)>0\} \to \mathbb{C}\setminus(-\infty,0]$ defined by
$$ 
\nu(r e^{i\theta}) = 
\begin{cases}
r e^{i\theta},& |\theta|\leq \pi/4,  \\
r e^{i3(\theta -\pi/4)+i \pi/4},&  \pi/4 < \theta < \pi/2  \\
r e^{i3(\theta +\pi/4)+i \pi/4},&  -\pi/4  >  \theta >- \pi/2. 
\end{cases}
$$ 
Thus $\sigma := \mu^{-1} \circ \nu \circ \mu = \mu \circ \nu \circ \mu$
is $3$-quasiconformal from $\{|z|>1\}$ onto $\mathbb{C}\setminus[-1,1]$ (see Figure \ref{fig:sigma_def}). 
\end{definition}

\begin{figure}[ht!]
{\includegraphics[width=1\textwidth]{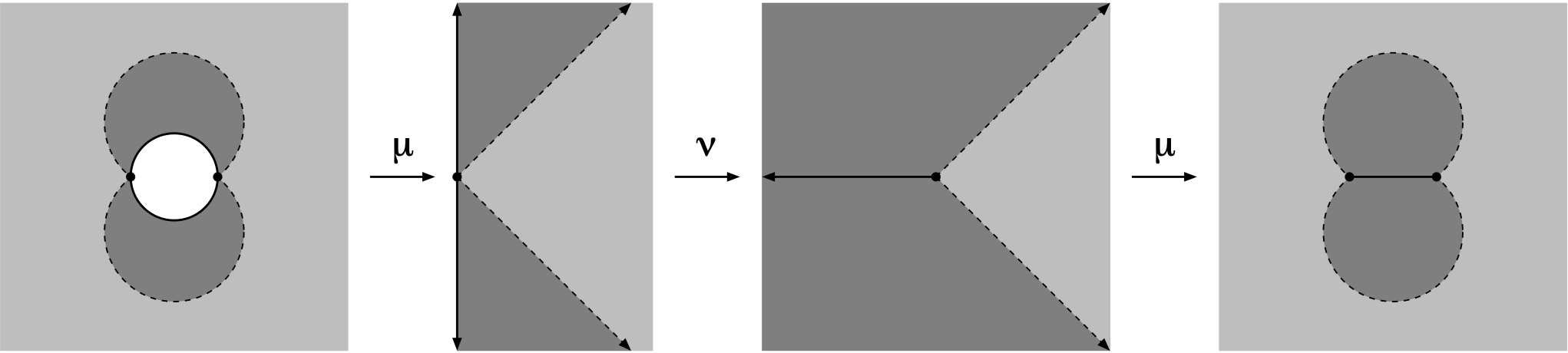}}
	\caption{A  quasiconformal map $\sigma: \{|z|>1\} \rightarrow \complex \setminus [-1,1]$. The  composition 
	$\mu \circ \nu \circ \mu$ is the identity 
	in the light gray region, and $3$-QC in the dark gray.}
\label{fig:sigma_def}
\end{figure}

\begin{definition}\label{g_n_defn} For every $n\in\mathbb{N}$, we define a map $g_n: \Chat\setminus \tilde{G}_n \rightarrow \Chat$ as follows. In each white face $F$, we set $g_n:=h_n$. If $F$ is a black face, we define $g_n$ by adjusting the definition of $h_n$ in $F'$ as follows. Let $X\subset \complex$ be the set where the map $\sigma$ of Definition \ref{sigma_definition}
is not conformal (this is the dark shaded region in 
the leftmost picture of Figure \ref{fig:sigma_def}). The set $(\iota_m)^{-1}(X)$ has $2m$ components in $\mathbb{D}^*$, one neighboring each component of $\mathbb{T}\setminus\mathcal{Z}_m$ with diameter comparable to $1/m$. 
Thus the set $(h_n|_{F'})^{-1}(X)$ has $2m(n, F)$ components in $F'$, one neighboring each side of each edge of 
$\tilde{G}_n\cap\partial F'$. We let $U$ denote the union of those 
components of $(h_n|_{F'})^{-1}(X)$ which neighbor an edge of 
$(\tilde{G}_n\cap\partial F')\setminus G_n$. We define
\begin{equation}\label{definition_of_g} 
	g_n(z):= \begin{cases} 
      \sigma \circ h_n(z) &\textrm{ if } z\in U \\
      h_n(z) & \textrm{ if }z\in F' \setminus U,
 \end{cases} \end{equation}
in $F'$ for each black face $F$.
\end{definition}

\begin{prop}\label{extension_prop} For every $n\in\mathbb{N}$ the function $g_n: \Chat\setminus \tilde{G}_n\rightarrow\Chat$ extends to a $K$-quasiregular function $g_n: \Chat \rightarrow \Chat$, where $K$ is independent of $n$, and 
\begin{equation}\label{areagoingto0} \emph{area}(\emph{supp}(g_n)_{\overline{z}})\xrightarrow{n\rightarrow\infty}0.
\end{equation}
\end{prop}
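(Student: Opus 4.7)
The plan is to verify three statements: (i) $g_n$ extends continuously across every edge of $\tilde G_n$; (ii) on each complementary face it has dilatation bounded by some $K<\infty$ independent of $n$; and (iii) the area bound $(\ref{areagoingto0})$ holds. Once (i) and (ii) are in place, the edges of $\tilde G_n$ are piecewise analytic arcs, hence removable for quasiregular maps, and we obtain the required $K$-quasiregular extension. For (i), I split the edges of $\tilde G_n$ into those lying in $G_n$ (which separate a white from a black face) and those in $\tilde G_n\setminus G_n$ (which lie inside a black face). On an edge $e$ of the first kind the white side is $\iota_n\circ\tilde\phi_F$; by Lemma \ref{adjustingphif}(1)--(2), $\tilde\phi_F$ maps $e$ length-multiplyingly onto a component of $\mathbb{T}\setminus\mathcal{Z}_n$, and since $\iota_n(z)=z^n$ on $\mathbb{T}$ (as $\eta_{r_n}\equiv 0$ there), this composition sends $e$ length-multiplyingly onto one of the two arcs of $\mathbb{T}\setminus\{\pm 1\}$. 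By Theorem \ref{complicated_folding_adjustment}(2) (and the Schwarz reflection extending $\iota_m$ to $\mathbb{D}^*$) the same is true from the black side, and tracking the colors prescribed at the endpoints and the orientations gives agreement. The main obstacle is the edges of the second kind, where the two one-sided limits of $\iota_m\circ\psi\circ\phi_F$ at $x\in e$ are the conjugate points $\zeta^m,\overline{\zeta}^m\in\mathbb{T}$ by Theorem \ref{complicated_folding_adjustment}(3). This is precisely the obstruction that $\sigma$ is designed to repair: one checks from $\sigma=\mu^{-1}\circ\nu\circ\mu$ that $\sigma$ sends a pair of conjugate points on $\mathbb{T}$ to a common point of $[-1,1]$ (since $\mu$ carries conjugates on $\mathbb{T}$ to $\pm it\in i\mathbb{R}$, $\nu$ collapses $i\mathbb{R}$ to $(-\infty,0]$, and $\mu^{-1}=\mu$ sends $(-\infty,0]$ onto $[-1,1]$). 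Hence $g_n=\sigma\circ\iota_m\circ\psi\circ\phi_F$ has matching limits from the two sides of $e$ inside $U$, and it matches with the unmodified $\iota_m\circ\psi\circ\phi_F$ on $\partial U\cap F'$ because $\sigma$ is the identity outside the dark region $X$ of Definition \ref{sigma_definition}.

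For (ii), on each face $g_n$ is a composition of maps each of uniformly bounded dilatation: $\phi_F$ is conformal, $\tilde\phi_F$ and $\psi$ are $K$-quasiconformal with $K$ independent of $n$ (Lemma \ref{adjustingphif}, Theorem \ref{complicated_folding_adjustment}), the interpolants $\iota_n$ and $\iota_m$ have dilatation $\le k<1$ uniformly (Lemma \ref{interpolation}), and $\sigma$ is $3$-quasiconformal. The composition therefore has dilatation bounded independently of $n$.

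For (iii), $\textrm{supp}((g_n)_{\overline z})$ is contained in the union of: (a) $N_C(\tilde G_n)$, arising from those points where $\tilde\phi_F$ or $\psi$ fails to be conformal (both are $(C,\cdot)$-supported by the cited results); (b) the pre-images under $\tilde\phi_F$ and $\psi\circ\phi_F$ of the annuli $\{r_n<|z|<1\}\cup\{1<|z|<1/r_n\}$ where $\iota_n$ or $\iota_m$ fails to be holomorphic; and (c) the set $U$ in each black face. Part (a) has area $O(1/n)$, since in each of the finitely many faces of $G$ the graph $\tilde G_n$ has $O(n)$ edges of diameter $O(1/n)$, each contributing area $O(1/n^2)$. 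Part (b) is small because the annuli have area $O(1/n)$ and $O(1/m)$ respectively, and, by Astala's area-distortion theorem applied on the finitely many faces of $G$, the $K$-quasiconformal pre-image of a set of small area has small area. Part (c) is a union of $O(m(n,F))$ components of diameter $O(1/m(n,F))$, hence has area $O(1/m(n,F))$. Since $m(n,F)\to\infty$ as $n\to\infty$ and the number of faces of $G$ is finite, summing the three contributions gives $(\ref{areagoingto0})$.
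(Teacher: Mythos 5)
Your argument is correct and follows essentially the same route as the paper: (i) continuity across the two types of edges via length-multiplying on $G_n$-edges and the conjugate-point/$\sigma$ mechanism on $\tilde G_n\setminus G_n$-edges, (ii) composition of uniformly $K$-QC pieces, and (iii) shrinking support. Two remarks on details. First, you go further than the paper in tracing through $\mu,\nu,\mu^{-1}$ to verify that $\sigma$ identifies conjugate pairs on $\mathbb{T}$ with points of $[-1,1]$; the paper asserts this without computation, so this is a welcome addition. Second, your area estimate is organized slightly differently: the paper dispatches (\ref{areagoingto0}) in one line by observing that all non-conformal pieces are $(C,\tilde G_n)$-supported, so $\textrm{supp}((g_n)_{\overline z})\subset N_C(\tilde G_n)$ whose area shrinks with the edge diameters, whereas you split off the $\iota$-annuli and $U$ separately and invoke Astala's area-distortion theorem for the preimages under the quasiconformal changes of coordinates. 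Both work; the paper's version avoids Astala but relies on the point (which your part (a) already implicitly uses) that the $\iota$-annuli and $U$ pull back into neighborhoods of the graph of controlled width. One small imprecision: in part (a), the count ``$O(n)$ edges of diameter $O(1/n)$'' undercounts the tree edges inside black faces, of which there are $O(m(n,F))$ with diameter $O(1/m(n,F))$; the resulting area bound is still $O(1/m(n,F))\to 0$, so the conclusion is unaffected.
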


\begin{proof} To check that $g_n$ extends quasiregularly across $\tilde{G}_n$, it suffices (by a standard removability result for quasiregular mappings) to check that $g_n$ extends continuously across each edge of $\tilde{G}_n$. There are two types of edges to check: those belonging to $G_n$, and those belonging to $\tilde{G}_n\setminus G_n$. 

Let $e$ be an edge in $G_n$, and $g_n^B$ (resp. $g_n^W$) denote the boundary values of the map $g_n$ restricted to the black (resp. white)
 face containing $e$ on its boundary. Then, since $z\mapsto z^d$ is length-multiplying on $\mathbb{T}$ for every $d$, and $\iota_n(z)=z^n$ for $z\in\mathbb{T}$, Lemma \ref{adjustingphif} implies that $g_n^W(e)=\pm T\cap\mathbb{H}$ and $g_n^W$ is length-multiplying on $e$. Similarly, Theorem \ref{complicated_folding_adjustment}(2) implies that $g_n^B=\pm T\cap\mathbb{H}$ and $g_n^B$ is length-multiplying on $e$. Since $g_n^W$ and $g_n^B$ both map $e$ to the same set, agree on the endpoints of $e$, and both are length-multiplying on $e$, we conclude that $g_n^W$, $g_n^B$ agree pointwise on $e$. Hence $g_n$ extends continuously across $e$. 
 
Now, let $e$ be an edge in  $\tilde{G}_n\setminus G_n$. Then, by Theorem \ref{complicated_folding_adjustment}(3) and Definition \ref{h_n_defn}, the two limits $\lim_{F'\ni z\rightarrow x}h_n(z)$ are conjugate points of $\mathbb{T}$ for any $x\in e$. Thus, since $\sigma$ maps conjugate points of $\mathbb{T}$ onto the same point of $[-1,1]$, Definition \ref{g_n_defn} implies that the limit $\lim_{F'\ni z\rightarrow x}g_n(z)$ exists and is real-valued, and hence $g_n$ extends continuously across $e$. 

Thus we have shown that $g_n$ extends quasiregularly across $\tilde{G}_n$, and since the quasiconformality constants of the maps $\sigma$, $\iota$, $\psi$, $\tilde{\phi}_F$ in the definition of $g_n$ do not depend on $n$, neither does the quasiconformality constant of $g_n$. Lastly, the relation (\ref{areagoingto0}) follows since each of the maps in the definition of $g_n$ is either conformal or $(C, \tilde{G}_n)$-supported for $C$ independent of $n$. 
\end{proof}

\begin{notation} Recall Definition \ref{defnofcn} of the constant $c_n$. For $n\in\mathbb{N}$, we let
\begin{equation} A_n:= A\left(c_n^n+\delta c_n, 1/\max_F\left(c_{m(n, F)}^{m(n, F)}+\delta c_{m(n,F)}\right)\right), \end{equation}
where the maximum is over all black faces $F$. 
\end{notation}

\begin{prop}\label{location_of_cvs} For every $n$, we have 
\begin{equation}\label{locofcvseqtn} \{\pm1\}\subset\emph{CV}(g_n)\subset  A_n\textrm{ and } V(G)\subset \emph{CP}(g_n)
\end{equation}
\end{prop}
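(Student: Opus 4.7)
My approach is to classify all critical points of the quasiregular map $g_n$ from Definition \ref{g_n_defn} according to whether they arise from the interior critical structure of $\iota_n$ and $\iota_{m(n, F)}$ in the individual faces or from the combinatorial gluing of $g_n$ across vertices of $\tilde{G}_n$, and then to locate their images relative to the annulus $A_n$.

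First, to establish $V(G) \subset \textrm{CP}(g_n)$ (and simultaneously $+1 \in \textrm{CV}(g_n)$), I perform a local analysis at each $v \in V(G)$ of valence $2n_v$. The standing assumption gives that the $2n_v$ edges of $G$ meet at $v$ at angles $\pi/n_v$, and the $2$-coloring places $n_v$ white and $n_v$ black faces alternately around $v$. In each white face $F$, the quasiconformal homeomorphism $\tilde{\phi}_F$ of Lemma \ref{adjustingphif} sends the wedge at $v$ onto a half-disk inside $\mathbb{D}$ centered at $\tilde{\phi}_F(v) \in \mathcal{Z}_n^+$; since $\eta_r$ vanishes together with all its derivatives on $\mathbb{T}$, $\iota_n$ agrees with $z \mapsto z^n$ in a holomorphic neighborhood of $\mathbb{T}$ and is locally biholomorphic at each point of $\mathcal{Z}_n^+$, so $g_n = \iota_n \circ \tilde{\phi}_F$ carries the wedge onto the half-disk at $+1$ inside $\mathbb{D}$. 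A symmetric argument, using $\psi \circ \phi_F$ from Theorem \ref{complicated_folding_adjustment} and $\iota_{m(n, F)}$, shows each black face contributes the half-disk at $+1$ outside $\mathbb{D}$. Summing the $2n_v$ contributions, $g_n$ presents itself near $v$ as a degree-$n_v$ branched cover onto a full neighborhood of $+1 \in \Chat$, confirming $v \in \textrm{CP}(g_n)$ and $+1 \in \textrm{CV}(g_n)$.

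For $-1 \in \textrm{CV}(g_n)$, I apply the analogous local analysis at an interior branching vertex $p$ of one of the trees introduced by Theorem \ref{complicated_folding_adjustment} in some black face $F$. At such a $p$ of tree-degree $k \geq 3$, the $k$ sectors of $W$ at $p$ are mapped by $\psi \circ \phi_F$ onto half-disks centered at $k$ distinct points of $\mathcal{Z}_{m(n, F)}$; by the continuity of $g_n$ established in Proposition \ref{extension_prop}, these $k$ centers all lie in the same $\mathcal{Z}_{m(n, F)}^\pm$ class, and I choose $p$ so that this class is $\mathcal{Z}_{m(n, F)}^-$ (such a $p$ exists because the inserted trees must collectively cover all of $\mathcal{Z}_{m(n, F)}$ in order for $\psi \circ \phi_F$ to be a homeomorphism onto $\mathbb{D}^*$). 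Then $\iota_{m(n, F)}$ sends each sector onto a region at $-1$, and $\sigma$, which fixes $\pm 1$, preserves this: $g_n$ is a degree-$k$ branched cover near $p$ with critical value $-1$.

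Finally, for $\textrm{CV}(g_n) \subset A_n$, the remaining critical points of $g_n$ are interior, being preimages of $\textrm{CP}(\iota_n)$ (resp. $\textrm{CP}(\iota_{m(n, F)})$) under $\tilde{\phi}_F$ (resp. $\psi \circ \phi_F$). By \eqref{cpcviota} and Lemma \ref{annulus_pullback_lemma}, the white-face critical values have modulus in $(c_n^n + \delta c_n, 1)$, and the black-face critical values (after Schwarz reflection of $\iota_{m(n,F)}$) have modulus in $\bigl(1, (c_{m(n, F)}^{m(n, F)} + \delta c_{m(n, F)})^{-1}\bigr)$; both regions lie in $A_n$ by its very definition. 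The boundary critical values $\pm 1$ lie in $A_n$ because $c_n^n + \delta c_n < 1 < (c_{m(n, F)}^{m(n, F)} + \delta c_{m(n, F)})^{-1}$ for all sufficiently large $n$ and $m(n,F)$, again by Lemma \ref{annulus_pullback_lemma}. The most delicate step will be locating the branching tree vertex at which $-1 \in \textrm{CV}(g_n)$ is exhibited, since this requires tracking the detailed combinatorial folding structure of Theorem \ref{complicated_folding_adjustment}.
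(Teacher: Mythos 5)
Your overall decomposition mirrors the paper's: critical values of $g_n$ are sorted into those coming from the interior critical points of $\iota_n$ and $\iota_{m(n,F)}$ (which land in $A_n$ via \eqref{cpcviota} and Lemma \ref{annulus_pullback_lemma}), and those coming from vertices of $\tilde{G}_n$ (which land at $\pm1$). Your careful local wedge-counting at $v\in V(G)$ is a correct and detailed version of what the paper only states implicitly, and your treatment of $\textrm{CV}(g_n)\subset A_n$ matches the paper's.

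The argument for $-1\in\textrm{CV}(g_n)$, however, has a real gap. You restrict attention to an ``interior \emph{branching} vertex $p$ of tree-degree $k\geq 3$.'' The folding construction of Theorem \ref{complicated_folding_adjustment} (following \cite{Bis15}) produces trees that may simply be chains with no branching at all; your justification for the existence of a degree-$\geq 3$ branch point (``the inserted trees must collectively cover all of $\mathcal{Z}_{m(n,F)}$'') does not imply branching, since a single long chain also covers many roots. The vertices you actually want are the degree-$2$ \emph{interior} vertices of the chains: at such a vertex $p$, the two sectors of $W$ on either side of the chain are opened by $\psi\circ\phi_F$ onto two half-disks at roots in the same class $\mathcal{Z}_{m(n,F)}^\pm$, $\iota_{m(n,F)}$ carries these to two half-disks at $\pm1$ in $\mathbb{D}^*$, and $\sigma$ then opens each $\pi$-wedge in $\mathbb{D}^*$ at $\pm1$ to a $2\pi$-slit region in $\mathbb{C}\setminus[-1,1]$, so the total winding at $p$ is $2$ and $p$ is a genuine critical point of local degree $2$. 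In other words, $g_n$ \emph{folds} the chain at every interior vertex, and since the chain vertices alternate color, the white-colored ones give $-1\in\textrm{CV}(g_n)$ — there is no need, and no guarantee, of branching. Your wedge-counting is otherwise sound (angle $2\pi\mapsto k\pi$ under $\psi\circ\phi_F$, preserved by $\iota_m$, doubled to $2k\pi$ by $\sigma$, giving local degree $k$); it simply applies already at $k=2$. Finally, note the mild tension (shared with the paper) between the ``for every $n$'' in the statement and the ``for all sufficiently large $n$, $m(n,F)$'' you invoke for the inequalities placing $\pm1$ in $A_n$; this is harmless since only large $n$ is used downstream, but it is worth flagging.
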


\begin{proof} If $F$ is a white face, then $g_n|_F:=\iota_n\circ\tilde{\phi}_F$. Since $\tilde{\phi}_F$ is quasiconformal, we have 
\begin{equation}\label{1e} \textrm{CV}(g_n|_F)=\textrm{CV}(\iota_n)\subset A_n
\end{equation}
by Lemma \ref{annulus_pullback_lemma}. Similarly, if $F$ is a black face, then $g_n|_{F'}$ is a composition of quasiconformal mappings with $\iota_{m(n,F)}$, and so again by Lemma \ref{annulus_pullback_lemma} we have:
\begin{equation}\label{2e} \textrm{CV}(g_n|_{F'})=\textrm{CV}(\iota_{m(n,F)})\subset A_n.
\end{equation}
The only points of $\tilde{G}_n$ where $g_n$ is locally $d:1$ for $d>1$ are the vertices of $\tilde{G}_n$ (which include $V(G)$), and the vertices of $\tilde{G}_n$ are mapped to $\pm1$ by $g_n$. This, together with (\ref{1e}) and (\ref{2e}), implies (\ref{locofcvseqtn}).
\end{proof}

\begin{notation} For $\varepsilon>0$, let $N_\varepsilon(E):=\{z: d(z,E)<\varepsilon\}$ denote the $\varepsilon$-neighborhood of a set $E\subset\mathbb{C}$. 
\end{notation}

\begin{prop}\label{pullback of annulus}  Let $\varepsilon>0$. Then, for all sufficiently large $n$, $g_n^{-1}(A_n)\subset N_{\varepsilon}(G)$. 
\end{prop}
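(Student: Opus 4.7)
The plan is to decompose $g_n^{-1}(A_n)$ into two pieces: the part lying inside $N_C(\tilde{G}_n)$, which will automatically lie in $N_\varepsilon(G)$ once $n$ is large enough because the edges of $\tilde{G}_n$ have diameter tending to $0$; and the part lying outside $N_C(\tilde{G}_n)$, where the quasiregular definition of $g_n$ collapses to $\iota_\ell\circ\phi_F$ for the conformal map $\phi_F$ of the containing face, after which Lemma \ref{annulus_pullback_lemma} together with Carath\'eodory continuity of $\phi_F^{-1}$ will force such points to lie close to $\partial F\subset G$.

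For the first piece, I would verify that the maximum edge diameter in $\tilde{G}_n$ is $O(1/n)$. For the edges of $G_n$, this follows because $\tilde{V}_n^F=\phi_F^{-1}\{\zeta:\zeta^n=\pm1\}$ consists of $2n$ points spaced $O(1/n)$ apart on the analytic boundary $\partial F$ (using that $\phi_F$ extends analytically across $\partial F$ and hence $|\phi_F'|$ is bounded there). For the trees added in Theorem \ref{complicated_folding_adjustment}, the construction (piecewise linear in logarithmic coordinates, as sketched in the remark following that theorem) produces spikes of comparable size. Thus $N_C(\tilde{G}_n)\subset N_{\varepsilon/2}(G)$ for all $n$ sufficiently large.

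For the second piece, fix $z\in g_n^{-1}(A_n)\setminus N_C(\tilde{G}_n)$ in some face $F$. By Lemma \ref{adjustingphif}(3) in the white case, and Theorem \ref{complicated_folding_adjustment}(1) in the black case, the maps $\tilde{\phi}_F$ and $\psi\circ\phi_F$ coincide with $\phi_F$ at $z$, since $z$ lies outside the support of their Beltrami coefficients. Moreover $z\notin U$, since the components of $\iota_m^{-1}(X)$ have diameter $\sim 1/m$ in $\mathbb{D}^*$ and hence pull back via the $K$-quasiconformal map $\psi^{-1}\circ\phi_F$ to $O(1/m)$-neighborhoods of $\partial W\subset\tilde{G}_n$, so that $U\subset N_C(\tilde{G}_n)$ for the given $C$. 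Therefore $g_n(z)=\iota_\ell(\phi_F(z))$ with $\ell=n$ in the white case and $\ell=m(n,F)$ in the black case. By Lemma \ref{annulus_pullback_lemma}, combined with the Schwarz reflection extending $\iota_\ell$ to $\mathbb{D}^*$, $\iota_\ell^{-1}(A_n)$ lies in a symmetric thin annulus $A(1-\delta_n,1+\delta_n)$ around $\mathbb{T}$ with $\delta_n\to 0$. Since $\phi_F$ extends to a homeomorphism of $\overline{F}$ onto $\overline{\mathbb{D}}$ (or $\overline{\mathbb{D}^*}$) by Carath\'eodory's theorem, it is uniformly continuous, and hence $\phi_F^{-1}(A(1-\delta_n,1+\delta_n))\subset N_{\varepsilon/2}(\partial F)\subset N_{\varepsilon/2}(G)$ for large $n$. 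Because $G$ has only finitely many faces, $n$ can be chosen uniformly.

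The main obstacle is the diameter bound on the added trees in the first piece: Theorem \ref{complicated_folding_adjustment} does not explicitly control the diameters of the trees, so one has to unpack the construction in \cite{Bis15} (or \cite{Bishop-Lazebnik}) to confirm that the spikes fill in the missing $m$-th roots of $\pm 1$ with inter-vertex spacing $O(1/m)$ and extend at most $O(1/n)$ into each face. Once that geometric fact is in hand, the rest is a routine combination of the supportedness conditions on the Beltrami coefficients, the annulus pullback Lemma \ref{annulus_pullback_lemma}, and uniform continuity of the Carath\'eodory extensions of the finitely many $\phi_F$.
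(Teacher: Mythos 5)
Your proof is correct and follows essentially the same strategy as the paper: use Lemma \ref{annulus_pullback_lemma} to pull $A_n$ back under $\iota_\ell$ to a thin annulus near $\mathbb{T}$, and then pull that back under the (adjusted) face uniformizations, observing that the result stays within a small neighborhood of $\partial F$ together with the trees, hence within $N_\varepsilon(G)$. The ``main obstacle'' you flag concerning tree diameters is not actually a gap: the $(C,G_n)$-tree-domain condition of Theorem \ref{complicated_folding_adjustment} (via Definition \ref{vertsuppdefn}) already confines the trees to the scaled neighborhood $N_C(G_n)$, whose width tends to zero with $n$ since the edges of $G_n$ have diameter $O(1/n)$, so no further unpacking of \cite{Bis15} is needed.
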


\begin{proof} Let $F$ be a white face of $G$. Lemma \ref{annulus_pullback_lemma} implies that $(\iota_n|_\mathbb{D})^{-1}(A_n)\subset A(1-\varepsilon', 1)$ for any $\varepsilon'>0$ if $n$ is sufficiently large. Thus
\begin{equation}\label{whiteface} (g_n|_F)^{-1}(A_n)=\phi_F^{-1}((\iota_n|_F)^{-1}(A_n))\subset N_{\varepsilon}(G) \textrm{ for every white face } F \end{equation}
and sufficiently large $n$. Similarly, if $F$ is a black face of $G$, then $(\iota_{m(n,F)}|_{\mathbb{D}^*})^{-1}(A_n)\subset A(1, 1+\varepsilon')$ by Lemma \ref{annulus_pullback_lemma} and so the definition of $g_n|_{F'}$ implies that 
\begin{equation}\label{blackface} (g_n|_{F'})^{-1}(A_n)\subset N_{\varepsilon}(G) \textrm{ for every black face } F \end{equation}
and sufficiently large $n$. Together, (\ref{whiteface}) and (\ref{blackface}) imply the result. 
\end{proof}

\noindent Recalling $\textrm{CV}(g_n)\subset A_n$ by Proposition \ref{location_of_cvs}, we may define:

\begin{definition}\label{Sigma_n_defn} For every $n\in\mathbb{N}$, let $\Sigma_n\subset A_n$ be an analytic Jordan curve passing through the critical values of $g_n$. 
\end{definition} 

\noindent For each $n$ there are many analytic Jordan curves $\Sigma_n$ satisfying Definition \ref{Sigma_n_defn}; any of them will suffice in what follows. 

\begin{prop}\label{quasiregular_vers} Let $\varepsilon>0$. Then, for all sufficiently large $n$, the map $g_n$ satisfies 
\begin{equation}\label{hausdorff_ineq} d_H(g_n^{-1}(\Sigma_n), G)<\varepsilon. \end{equation}
\end{prop}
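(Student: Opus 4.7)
The plan is to prove the Hausdorff bound by establishing its two defining inclusions separately. The inclusion $g_n^{-1}(\Sigma_n)\subset N_\varepsilon(G)$ is essentially free from the setup: Definition \ref{Sigma_n_defn} gives $\Sigma_n\subset A_n$, so Proposition \ref{pullback of annulus} delivers $g_n^{-1}(\Sigma_n)\subset g_n^{-1}(A_n)\subset N_\varepsilon(G)$ as soon as $n$ is large enough. Everything interesting happens in the reverse inclusion $G\subset N_\varepsilon(g_n^{-1}(\Sigma_n))$.

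For that reverse inclusion, I would treat vertices and edges separately. Every $v\in V(G)$ satisfies $v\in\textrm{CP}(g_n)$ by Proposition \ref{location_of_cvs}, hence $g_n(v)\in\textrm{CV}(g_n)\subset\Sigma_n$, placing $V(G)$ inside $g_n^{-1}(\Sigma_n)$ outright. For a non-vertex point $x$ on an edge $e\subset G$ from $v_1$ to $v_2$, I would fix a white face $F$ with $e\subset\partial F$ and produce a connected subset of $g_n^{-1}(\Sigma_n)\cap F$ joining $v_1$ to $v_2$ while remaining in $N_\varepsilon(e)$. By Definition \ref{Gndefn} together with Lemma \ref{adjustingphif}(1), the quasiconformal map $\tilde\phi_F$ carries the subdivision vertices $V_n\cap\overline{e}$ to a consecutive string of points in $\mathcal{Z}_n\subset\mathbb{T}$; Lemma \ref{annulus_pullback_lemma} then confines $\iota_n^{-1}(\Sigma_n)\cap\mathbb{D}$ to an arbitrarily thin annulus $A(1-\varepsilon'',1)$ for large $n$. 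Because $\iota_n$ is a degree-$n$ quasiregular branched cover with $\iota_n|_\mathbb{T}=z^n$, the restriction of $\iota_n^{-1}(\Sigma_n\cap\overline{\mathbb{D}})$ to that thin annulus decomposes into arcs whose endpoints are consecutive points of $\mathcal{Z}_n$. Pulling these arcs back by $\tilde\phi_F^{-1}$ and concatenating along $e$ gives a connected subset of $g_n^{-1}(\Sigma_n)\cap F$ from $v_1$ to $v_2$ contained in $N_\varepsilon(e)$, which supplies the required nearby point for every $x\in e$.

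The main obstacle will be making the structural description of $\iota_n^{-1}(\Sigma_n)\cap\mathbb{D}$ as precisely a family of arcs pairing consecutive $\mathcal{Z}_n$-points fully rigorous. To keep this step clean, I would use the freedom in Definition \ref{Sigma_n_defn} to select $\Sigma_n$ so that $\Sigma_n\cap\mathbb{T}=\{\pm 1\}$; then $\Sigma_n\cap\overline{\mathbb{D}}$ is a single Jordan arc from $+1$ to $-1$ threading the interior critical values of $\iota_n$ located by (\ref{cpcviota}), and its preimages can be counted using the degree-$n$ covering structure, combined with the explicit identity $\iota_n=z^n$ outside the interpolation annulus $A(r_{n,\delta},1)$ and the uniform dilatation bound within it supplied by Lemma \ref{interpolation}. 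With those ingredients in place, the enumeration of arcs and their endpoints in $\mathcal{Z}_n$ becomes combinatorially forced, so that the concatenation step in the previous paragraph produces the desired subset of $g_n^{-1}(\Sigma_n)$ within $N_\varepsilon(e)$.
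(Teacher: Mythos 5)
Your treatment of the inclusion $g_n^{-1}(\Sigma_n)\subset N_\varepsilon(G)$ is exactly the paper's (Proposition \ref{pullback of annulus} plus $\Sigma_n\subset A_n$), so that part is fine.

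For the reverse inclusion $G\subset N_\varepsilon(g_n^{-1}(\Sigma_n))$, you have over-engineered: you aim to build a \emph{connected} arc in $g_n^{-1}(\Sigma_n)\cap F$ shadowing each edge $e$, when all that the Hausdorff bound requires is a sufficiently dense set of points of $g_n^{-1}(\Sigma_n)$ lying on $G$. Such a set is already available from ingredients you cite. Since $\Sigma_n$ passes through $\pm 1$ (Proposition \ref{location_of_cvs}) and $\iota_n|_{\mathbb{T}}(z)=z^n$, we have $\mathcal{Z}_n\subset\iota_n^{-1}(\{\pm 1\})\subset\iota_n^{-1}(\Sigma_n)$; combining this with $\tilde{\phi}_F(V_n\cap\partial F)=\mathcal{Z}_n$ from Lemma \ref{adjustingphif}(1) gives $V_n\cap\partial F\subset g_n^{-1}(\Sigma_n)$ for every white face $F$. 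As the diameters of the edges of $G_n$ tend to $0$ with $n$ (Definition \ref{Gndefn}), these vertices are $\varepsilon$-dense in $\partial F$, and hence in $G$, for $n$ large. That is essentially the entirety of the paper's proof of this inclusion; it also subsumes your separate observation about $V(G)\subset\textrm{CP}(g_n)$, since each $v\in V(G)$ is itself a vertex of $V_n$.

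The price of the detour through arcs is a genuine gap. You explicitly defer the claim that $\iota_n^{-1}(\Sigma_n\cap\overline{\mathbb{D}})$ decomposes into arcs whose endpoints are consecutive points of $\mathcal{Z}_n$, calling it the main obstacle, but do not close it. It is not combinatorially free: the arc $\Sigma_n\cap\overline{\mathbb{D}}$ passes through the $n-1$ critical values of $\iota_n|_{\mathbb{D}}$, so its preimage is a graph with $4$-valent branch points at the $n-1$ critical points, not a disjoint union of simple arcs pairing consecutive points of $\mathcal{Z}_n$. Extracting the arcs you need from that branched preimage would take real work, and it is precisely the analysis the density observation renders unnecessary. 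As written, the reverse inclusion remains unproven.
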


\begin{proof} The inequality (\ref{hausdorff_ineq}) will follow once we show that
\begin{equation}\label{first_ineq} g_n^{-1}(\Sigma_n)\subset N_{\varepsilon}(G) \textrm{, and }
\end{equation}
\begin{equation}\label{second_ineq} G \subset N_{\varepsilon}(g_n^{-1}(\Sigma_n)). 
\end{equation}
The inequality (\ref{first_ineq}) follows from Proposition \ref{pullback of annulus} since $\Sigma_n\subset A_n$. The inequality (\ref{second_ineq}) follows since $\Sigma_n$ passes through $+1$ by Proposition \ref{location_of_cvs}, and so $\iota_n^{-1}(\Sigma_n)\subset\mathbb{T}$ passes through all $n^{\textrm{th}}$ roots of $+1$ which fill a $\varepsilon'$-dense subset of $\mathbb{T}$ for any $\varepsilon'>0$ if $n$ is sufficiently large.
\end{proof}

\begin{definition} By Proposition \ref{extension_prop} and the Measurable Riemann Mapping Theorem, for each $n$ there exists a $K$-quasiconformal mapping $\phi_n: \Chat\rightarrow\Chat$ so that
\begin{equation*}\label{defining_eqtn}
r_n:=g_n\circ\phi_n^{-1}: \Chat\rightarrow\Chat 
\end{equation*}
is holomorphic (and hence a rational function). We normalize each $\phi_n$ so as to fix any three given points in $\Chat$.
\end{definition}

\begin{lem}\label{converging_to_id} 
The mappings $\phi_n$ converge to the identity uniformly on compact subsets of $\Chat$. 
\end{lem}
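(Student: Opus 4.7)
The plan is to show that the family $\{\phi_n\}$ is normal with a unique possible limit, namely the identity. First I would observe that by Proposition \ref{extension_prop} each $\phi_n$ is $K$-quasiconformal with $K$ independent of $n$, and the normalization fixing three points $p_1,p_2,p_3\in\Chat$ prevents any subsequential limit from degenerating to a constant. Hence by the standard compactness theorem for normalized quasiconformal maps (Ahlfors, \emph{Lectures on Quasiconformal Mappings}), the family $\{\phi_n\}$ is precompact in $C(\Chat,\Chat)$ with respect to the spherical metric, and every uniform subsequential limit is itself a $K$-quasiconformal self-homeomorphism of $\Chat$ fixing $p_1,p_2,p_3$.

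Next I would identify the Beltrami coefficient. Since $r_n=g_n\circ\phi_n^{-1}$ is holomorphic, the Beltrami coefficient $\mu_n:=(\phi_n)_{\overline{z}}/(\phi_n)_z$ agrees a.e.\ with that of $g_n$, and in particular is supported in $\mathrm{supp}((g_n)_{\overline{z}})$ with $\|\mu_n\|_\infty\leq k<1$ by Lemma \ref{interpolation}. By (\ref{areagoingto0}), $\mathrm{area}(\mathrm{supp}\,\mu_n)\to 0$, so $\mu_n\to 0$ in measure and in $L^p(\Chat)$ for every $p<\infty$.

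Now I would pass to the limit. Let $\phi$ be any uniform subsequential limit of $(\phi_n)$ along a subsequence $n_k$. By the Ahlfors-Bers theory, the solution to the Beltrami equation with normalized fixed points depends continuously on the coefficient in the $L^p$ topology for $p$ sufficiently large (any fixed $p>2$ suffices, using the Calder\'on-Zygmund bounds for the Beurling transform together with the uniform upper bound $\|\mu_n\|_\infty\leq k<1$). Since $\mu_{n_k}\to 0$ in $L^p$, the corresponding solutions must converge uniformly to the solution of the Beltrami equation with $\mu\equiv 0$ normalized at $p_1,p_2,p_3$, which is the identity. Therefore $\phi=\mathrm{id}$.

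Since every uniformly convergent subsequence of $(\phi_n)$ converges to the identity, and the family is precompact, the full sequence $\phi_n$ converges uniformly on $\Chat$ to the identity, which in particular yields uniform convergence on compact subsets of $\Chat$. The main technical point is invoking the right continuity statement from Ahlfors--Bers: one can either appeal to continuity of the solution operator in $L^p$, or argue directly that a uniform limit of $K$-quasiconformal maps whose Beltrami coefficients converge to $0$ in measure must have Beltrami coefficient $0$ a.e., whence the limit is a conformal self-map of $\Chat$ and, by the three-point normalization, equal to $\mathrm{id}$.
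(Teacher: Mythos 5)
Your argument is correct and is essentially the same as the paper's: the paper compresses the entire proof into one sentence citing (\ref{areagoingto0}), the uniform $K$-quasiconformality from Proposition \ref{extension_prop}, and the three-point normalization, and you have supplied the standard details (normality of the family, the Beltrami coefficient of $\phi_n$ agreeing with that of $g_n$, convergence of $\mu_n$ to $0$ in measure, and continuity of the normalized solution of the Beltrami equation). No gaps; this is the intended argument, spelled out.
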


\begin{proof} 
This follows from (\ref{areagoingto0}) since the mappings $\phi_n$ are $K$-quasiconformal, all normalized to fix the same three points, and with $K$ independent of $n$ (by Proposition \ref{extension_prop}).
\end{proof}

\begin{prop}\label{rational_version} Let $\varepsilon>0$. Then, for all sufficiently large $n$, the rational map $r_n$ satisfies 
\begin{equation} d_H(r_n^{-1}(\Sigma_n), G)<\varepsilon. 
\end{equation}
\end{prop}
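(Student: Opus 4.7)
The plan is to reduce this to a direct application of the two preceding results, Proposition \ref{quasiregular_vers} and Lemma \ref{converging_to_id}, via the triangle inequality for the Hausdorff metric. The essential algebraic observation is that, since $r_n = g_n \circ \phi_n^{-1}$, we have
\begin{equation*}
r_n^{-1}(\Sigma_n) = \phi_n\bigl(g_n^{-1}(\Sigma_n)\bigr),
\end{equation*}
so the rational-function preimage differs from the quasiregular preimage only by the quasiconformal homeomorphism $\phi_n$. Theorem \ref{density_thm}'s first claim is then obtained by showing (i) the quasiregular preimage is already Hausdorff-close to $G$, and (ii) $\phi_n$ barely moves points for large $n$.

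First, I would fix $\varepsilon > 0$ and invoke Proposition \ref{quasiregular_vers} at the level $\varepsilon/2$ to obtain $n_0$ such that $d_H(g_n^{-1}(\Sigma_n), G) < \varepsilon/2$ for all $n \geq n_0$. Next, I would use Lemma \ref{converging_to_id}: since $\Chat$ is itself compact, the convergence $\phi_n \to \mathrm{id}$ on compact subsets of $\Chat$ is actually uniform on all of $\Chat$ (in the spherical metric), so there exists $n_1 \geq n_0$ with $\sup_{z\in\Chat} d(\phi_n(z), z) < \varepsilon/2$ for all $n \geq n_1$. This uniform bound immediately yields
\begin{equation*}
d_H\bigl(\phi_n(E), E\bigr) \leq \sup_{z\in\Chat} d(\phi_n(z), z) < \varepsilon/2
\end{equation*}
for any set $E\subset\Chat$, and in particular for $E = g_n^{-1}(\Sigma_n)$.

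Finally, the triangle inequality for $d_H$ gives, for $n \geq n_1$,
\begin{equation*}
d_H\bigl(r_n^{-1}(\Sigma_n), G\bigr) \leq d_H\bigl(\phi_n(g_n^{-1}(\Sigma_n)), g_n^{-1}(\Sigma_n)\bigr) + d_H\bigl(g_n^{-1}(\Sigma_n), G\bigr) < \varepsilon/2 + \varepsilon/2 = \varepsilon,
\end{equation*}
which is the desired conclusion.

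I do not anticipate any significant obstacle: the heavy lifting is already done in Proposition \ref{extension_prop} (uniform $K$-quasiconformality and the vanishing of the area of the support of $(g_n)_{\bar z}$), which underpins the global uniform convergence $\phi_n \to \mathrm{id}$, and in Proposition \ref{quasiregular_vers}. The only mild point to check is that the convergence in Lemma \ref{converging_to_id} is genuinely global in $\Chat$, which is automatic from compactness of the sphere.
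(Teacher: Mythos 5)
Your proof is correct and follows exactly the same route as the paper's: rewrite $r_n^{-1}(\Sigma_n)=\phi_n(g_n^{-1}(\Sigma_n))$, then combine Proposition~\ref{quasiregular_vers}, Lemma~\ref{converging_to_id}, and the triangle inequality for $d_H$. You merely spell out the (correct and useful) observation that ``uniformly on compact subsets of $\Chat$'' is the same as uniformly on $\Chat$ since the sphere is compact, which the paper leaves implicit.
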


\begin{proof} We have that $r_n^{-1}(\Sigma_n)=\phi_n(g_n^{-1}(\Sigma_n))$, and so the conclusion follows from Proposition \ref{quasiregular_vers}, Lemma \ref{converging_to_id}, and the triangle inequality.
\end{proof}

\noindent \emph{Proof of Theorem \ref{density_thm}:} Since $\Sigma_n$ runs through the critical values of $g_n$, and we have $\textrm{CV}(g_n)=\textrm{CV}(r_n)$ and $\textrm{CP}(r_n)=\phi_n(\textrm{CP}(g_n))$, we have that   $r:=r_n$ and $\Gamma:=\Sigma_n$ satisfy the conclusions of Theorem \ref{density_thm} for all large $n$ by (\ref{locofcvseqtn}), Lemma \ref{converging_to_id} and Proposition \ref{rational_version}.  \qed

\appendix \section{}

In this appendix, we record the definition of a branched cover for the sake of completeness, and we provide several more examples of analytic nets in Figures \ref{third_ex}-\ref{fifth_ex}. The same conventions explained in the caption of Figure \ref{two_beginning_ex} hold for Figures \ref{third_ex}-\ref{fifth_ex}. 

\begin{definition}\label{TBC_defn} Let $d\geq 1$. A map $f: \Chat\rightarrow\Chat$ is called a \emph{branched covering of degree} $d$ if there is a finite subset $W\subset\Chat$ so that
\begin{enumerate}
\item $f: \Chat\setminus f^{-1}(W) \rightarrow \Chat\setminus W$ is a covering map of degree $d$, and
\item For every $w \in W$ and each $z\in f^{-1}(w)$, there exist neighborhoods $U$, $V$ of $z$, $w$ (respectively), an integer $n\geq 1$, and homeomorphisms $\phi_U: U \rightarrow\mathbb{D}$, $\phi_V: V\rightarrow \mathbb{D}$ so that $\phi_V \circ f\circ  \phi_U^{-1}(z) = z^n$ for all $z\in \mathbb{D}$.  
\end{enumerate}
We will call the smallest $W$ satisfying the above the \emph{critical values} of $f$, denoted $\textrm{CV}(f)$, and if $w\in\textrm{CV}(f)$ and $z\in f^{-1}(w)$ satisfies (2) with $n\geq2$, we call $z$ a \emph{critical point} of $f$, and we denote the set of critical points by $\textrm{CP}(f)$. 
\end{definition}

\begin{figure}
\centering
\begin{minipage}{.5\textwidth}
  \centering
  \includegraphics[width=.9\linewidth]{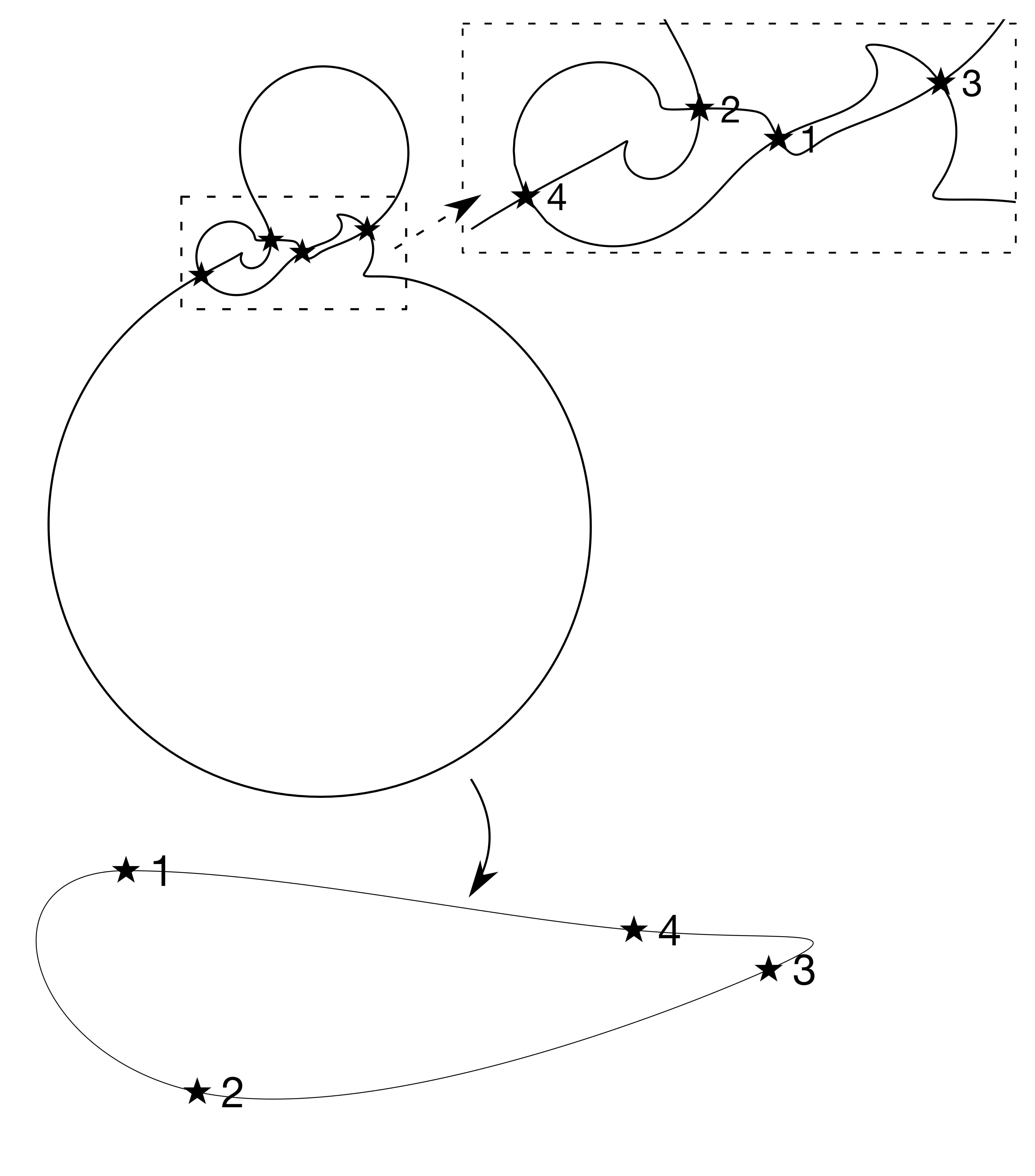}
  \captionof{figure}{A degree $3$ example.}
  \label{third_ex}
\end{minipage}%
\begin{minipage}{.5\textwidth}
  \centering
  \includegraphics[width=.9\linewidth]{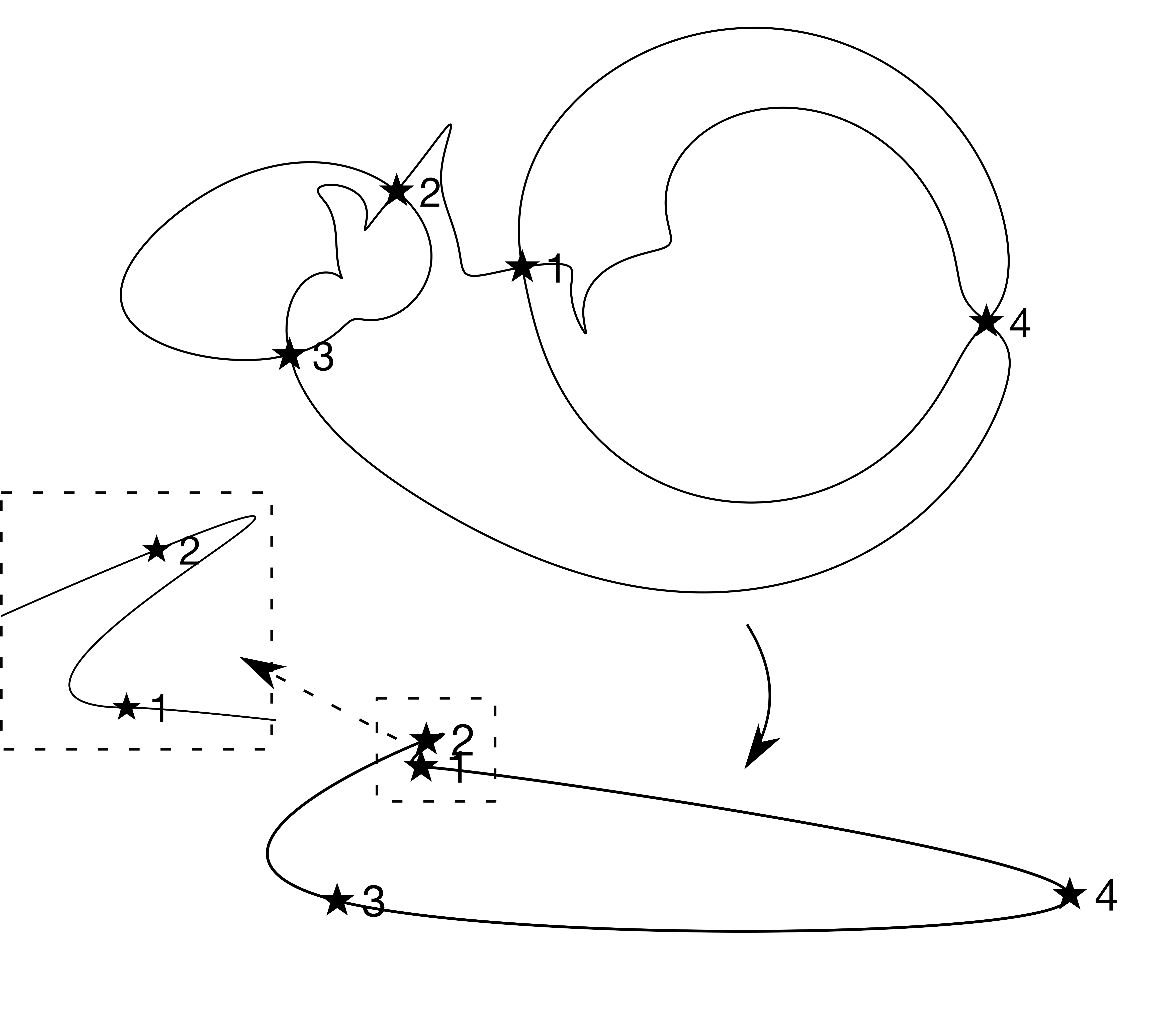}
  \captionof{figure}{Another degree $3$ example.}
  \label{fourth_ex}
\end{minipage}
\end{figure}

\begin{figure}
\centering
\includegraphics[width=1.1\linewidth]{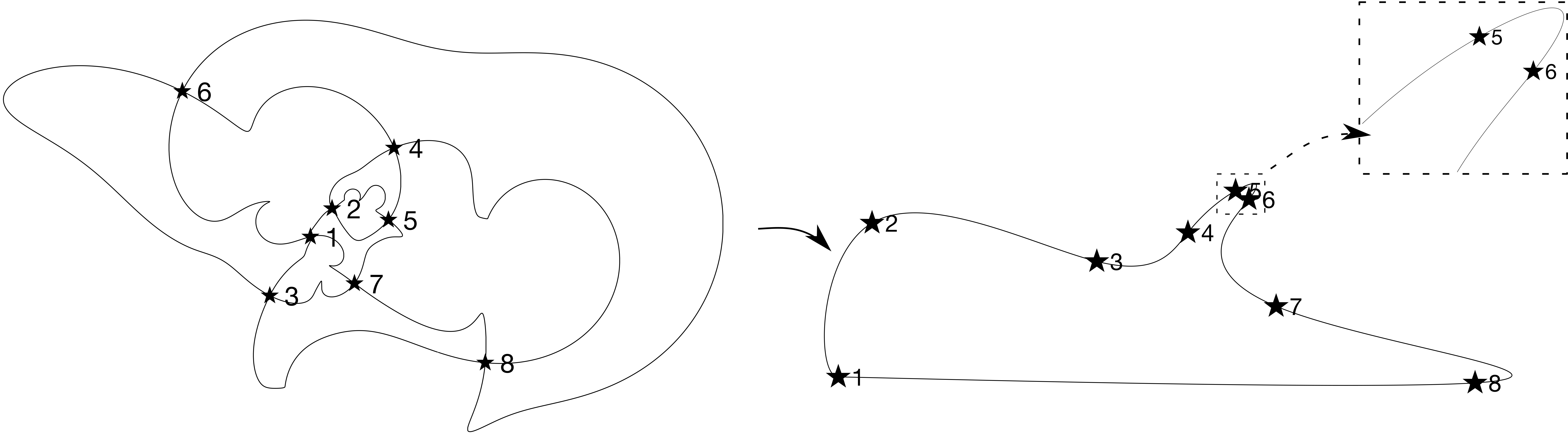}
\caption{A degree $5$ example.}
\label{fifth_ex}
\end{figure}


\bibliographystyle{alpha}

\end{document}